\documentclass[11pt]{article}
\input{amssym.def}
\input{amssym}
\usepackage[active]{srcltx}
\usepackage{color}
\usepackage{tikz}
\usepackage{relsize}

\usepackage{hyperref}
\hypersetup{
    colorlinks=true, 
    linktoc=all,     
    linkcolor=blue,  
}

\newenvironment{annotacia}{\centerline{\sc Abstract}\vspace{2mm}\narrower\narrower\sf}

\def\theequation{\thesection.\arabic{equation}}
\makeatletter\@addtoreset{equation}{section}\makeatother

\def\nn{\nonumber}\def\lb{\label}
\def\be{\begin{equation}}\def\ee{\end{equation}}
\def\ba{\begin{eqnarray}}\def\ea{\end{eqnarray}}

\newcounter{theorem}\makeatletter
\@addtoreset{theorem}{section}\makeatother

\newtheorem{prop}[theorem]{Proposition}
\newtheorem{rem}[theorem]{Remark}

\newtheorem{def-lem}[theorem]{Definition-Lemma}
\newtheorem{def-prop}[theorem]{Definition-Proposition}

\begin{document}
\title{ }
\begin{center}
{\Large \textbf{Iterative construction of the R-matrices in arbitrary dimensions}}
	
\vspace{1cm} {\large \textbf{Petr Pivovarov$^{\diamond}$ 
and Pavel Pyatov$^{\diamond\,\dag}$}}

\vskip .3cm $^{\diamond}${National Research University "Higher School 
of Economics", 20 Myasnitskaya street, Moscow
101000, Russia}
	
\vskip .3cm $^{\dag}${Bogoliubov Laboratory of Theoretical Physics, 
JINR, 141980 Dubna, Moscow region, Russia}
\end{center}

\begin{annotacia}\noindent 
We investigate a special ansats that allows for an iterative solution of the constant Yang-Baxter equation.
Testing this ansatz, we construct four sequences of the constant R-matrices. 
In each sequence the R-matrices act on the tensor squares of vector spaces of linearly growing dimensions. Each R-matrix also depends on a single complex parameter.

By analyzing the spectra of the R-matrices, we conclude that the first and the third series are associated with the symmetric tensor representations of the quantum groups $U_q(\frak{sl}(2))$ and $U_q(\frak{sl}(2|1))$, respectively. The other two series appear to be  related to representations of  quantum groups in the case  where $q$ is a root of unity. The elements of the second series are associated with nilpotent representations of $U_q(\frak{sl}(2))$ (representations $T_{00\lambda}$ in the notation of \cite{KSch}).

We also check the correspondence  between these R-matrices and link invariants. Applying the Reshetikhin–Turaev procedure to the first, second, and fourth sequences yields, respectively, colored Jones polynomials, ADO invariants, and Alexander polynomials.
To our surprise, a similar calculation for the third series yields a trivial result, identically equal to 1.

\end{annotacia}

\newpage
\tableofcontents
\bigskip\bigskip\bigskip
\section{Introduction}\lb{sec1}

\section{Preliminaries}\lb{sec2}

Over the past half-century, the matrix braid relation -- also known as the constant Yang-Baxter equation --  has become one of the fundamental equations of modern mathematics and mathematical physics. 
A solution to this equation is an operator, commonly referred to as an R-matrix, acting on the tensor square of a finite- or infinite-dimensional space $V$.
R-matrices have found numerous applications in the theory of quantum groups and the construction of link invariants, as well as in the theory of integrable systems in statistical physics, quantum mechanics, and models of stochastic processes.

The problem of classifying R-matrices appears unrealistic, except for the simplest case where they act on a space $V$ of dimension 2 \cite{H1,H2}.
Even in dimension 3, R-matrix solutions to the braid relation have been investigated only within the framework of specific ansätze. The upper-triangular R-matrices were classified in \cite{H3}.	In \cite{EO}, all R-matrices associated with quantum groups of type $GL(3)$ were found.  Finally, a family of R-matrices satisfying the "additive charge conservation" condition was constructed in \cite{HMR}.

Another line of research examined various ansätze enabling the construction of families of R-matrices acting on spaces $V$ of arbitrary dimensions.  
The starting point for the research here was the so-called ice condition, which requires the R-matrix to have non-zero components only at the same positions as the identity operator or the permutation operator. The family of ice-type R-matrices
was classified in \cite{O}. It turns out that these R-matrices have at most two distinct eigenvalues  and, consequently, they generate representations of Iwahori–Hecke algebras (see, e.g., \cite{Jo}). These R-matrices are associated with the quantum linear groups 
and supergroups,
 see \cite{O,I,GPS}.

Two generalizations of the ice conditions were considered.
The rime condition examined in \cite{OP} encompasses a broader family of R-matrices possessing at most two eigenvalues. 
Another generalization \cite{I} leads to the construction of a family of orthogonal, symplectic, and orthosymplectic 
R-matrices. These R-matrices have at most three distinct eigenvalues
and generate representations of Birman–Murakami–Wenzl algebras \cite{BW,M1}.

In the present work, we consider yet another generalization of the ice ansatz. We combine the conditions of  additive charge conservation \cite{HMR} and triangularity \cite{H3} (respectively, degree conservation and  upper anti-triangularity in our notation, see the next section).
This combination allows us to seek solutions to the braid relation iteratively, using induction on the dimension of the space $V$.  

Without aiming to directly classify all possible solutions, we obtain, within the framework of this ansatz, four families of R-matrices that act on spaces $V$ of arbitrarily high dimension. Each R-matrix contains  a single essential parameter explicitly related to its eigenvalues.\footnote{We fix the values of the non-essential gauge parameters and twist parameters. The twist parameters are reconstructed in the appendix.} Such a parameterization allows us to control the spectrum of R-matrices: the number of their distinct eigenvalues grows linearly with the dimension of the space $V$.  Two of these families -- one corresponding to   case $a_1)$ in proposition \ref{prop2}  and the other in proposition \ref{prop3} -- are evidently related to evaluations of universal R-matrices of the quantum groups $U_q({\frak sl}(2))$ and $U_q({\frak sl}(2|1))$ in their symmetric tensor representations,  that is, in the irreducible representations corresponding to single-row Young diagrams. The other two --  case $a_2)$ in proposition \ref{prop2} and another family in proposition \ref{prop4} -- are related to representations of the quantum groups at  roots of unity. In the main text, we discuss in somewhat greater detail the connection between the resulting R-matrices and the quantum groups, as well as  their relation to link invariants.

The diagonal twist transformation formulas for all four families of R-matrices are collected in the appendix.

\subsection{Definitions and notation}\lb{subsec2.1}

Let $V$ be a finite dimensional $\Bbb C$-linear space. We denote $\mbox{dim}\, V=N$ and fix
a basis $\{e_i\}_{i=1}^{N}$ in $V$. We also fix the induced bases in tensor power spaces $V^{\otimes n}$:
$\{e_{i_1}\otimes e_{i_2}\otimes \ldots \otimes e_{i_n}\}_{i_k=1}^{N}$,
assuming lexicographical ordering of the basis vectors. Thus, any operator $A\in {\rm End}(V^{\otimes n})$ is identified with its matrix 
$A_{i_1,\ldots ,i_n}^{\,j_1,\ldots ,j_n}$ in a fixed basis.

Assigning to each basis vector $e_i$ a {\em degree function} ${\rm deg} (e_i):= i$ we extend it to   basis vectors in tensor product spaces $V^{\otimes n}$ additively:\,
$
{\rm deg}(e_{i_1} \otimes e_{i_2} \otimes... \otimes e_{i_n}):=\sum_{k=1}^{n} i_k
$.\,
That gives a decomposition of the space $V^{\otimes n}$ into a direct sum of its degree subspaces
\be
\lb{deg-spaces}
\textstyle
V^{(n;m)}:= {\rm Span}\{e_{i_1} \otimes...\otimes e_{i_n}|\,{\textstyle\sum_{k=1}^n} i_k = m\}, \quad
V^{\otimes n}= \bigoplus_{m=n}^{ n N }V^{(n;m)}.
\ee

An operator $A\in {\rm End}(V^{\otimes n})$ acting invariantly in all subspaces $V^{(n;m)}$ is called {\em degree-conserving}.
The matrix of the degree-conserving operator has a block-diagonal form, we denote $A^{(m)}$ its block acting in the subspace $V^{(n;m)}$. 

We call an operator $A$ {\em upper anti-triangular} if all elements of its matrix lying strictly below the main anti-diagonal are zero. 
\smallskip

In this paper we consider exclusively operators acting in the spaces $V$, $V^{\otimes 2 }$ and $V^{\otimes 3 }$. 

Let us denote ${\rm Id}_V$ the identity operator in space $V$. With any operator $A\in {\rm End}(V^{\otimes 2})$ we associate a pair of operators 
$A_{1,2}\in {\rm End}(V^{\otimes 2})$:
\be
\lb{A1,2}
A_1:= A\otimes{\rm Id}_V, \quad A_2:= {\rm Id}_V\otimes A.
\ee
We use the capital letter $R$ to denote an invertible operator $R\in {\rm Aut}(V^{\otimes 2})$ satisfying the equation
\be
\lb{YB}
R_1 R_2 R_1 - R_2 R_1 R_2 =0. 
\ee
Traditionally, this equation is called {\em braid relation}, or {\em constant Yang-Baxter equation};  its solution $R$ is called {\em $R$-matrix}. In the sequel, we  use the shorthand notation 
\be
\lb{Y}
Y:=R_1 R_2 R_1 - R_2 R_1 R_2
\ee
for the operator appearing on the left-hand side of the braid relation.

\subsection{R-matrix: the ansatz}\lb{subsec2.2}

In the main text we investigate solutions of the constant Yang-Baxter equation satisfying additional conditions
\be
\lb{ansatz}
\begin{array}{ll}
\bullet&\mbox{$R$ is the degree-conserving operator;}\\
\bullet&\mbox{all blocks $R^{(m)}$, $m=2,\dots 2N$, are upper anti-triangular operators;}\\
\bullet										&\mbox{$R$ is normalized so that $R^{(2)}=1$;}\\
\bullet&\mbox{blocks of $R$ do not contain $2\times 2$ anti-diagonal sub-blocks.} 
\end{array}
\ee
\begin{rem}{\small\rm
		Note that the lower anti-triangularity condition can be equivalently used in the ansatz. A one-to-one correspondence between families of upper and lower antitriangular R-matrices can be established by the inversion map $R\mapsto R^{-1}$, or by the permutation map $R\mapsto P R P$, where 
		$P(e_i\otimes e_j)=e_j\otimes e_i$ is the permutation operator.}
\end{rem}
\begin{rem}{\small\rm
The normalization condition is not a restriction but rather a suitable choice of the representative in an equivalence class of the R-matrices: we choose R-matrix whose first eigenvalue equals 1.	The last condition is introduced in the ansatz to exclude from consideration the large number of permutation-type R-matrices (see, e.g. considerations in the next section).	}
\end{rem}

Let us clarify our motivation in this ansatz.

Degree-conservation condition or, in terminology of Ref. \cite{HMR}, {\em additive charge conservation}  implies a block-diagonal structure for the operator $Y\in {\rm End}(V^{\otimes 3})$  (\ref{Y}). In this case, solving the braid relation (\ref{YB}) reduces to considering the vanishing conditions for the smaller blocks $Y^{(m)}$, $m=3, \ldots, 3N$. Moreover\vspace{-1mm}
\begin{itemize}
	\item[-\,] 
	the R-matrix block $R^{(m)}$ appears for the first time in the braid relation block
	$Y^{(m+1)}$, and due to the anti-triangularity of $R^{(m)}$,  components of $Y^{(m+1)}$ can only be quadratic-linear in the components of $R^{(m)}$; 
	\item[-\,] 
	braid relations in block $Y^{(m)}$ are identical for degree-conserving R-matrices acting in any space $V$ of dimension $N\geq m-2$. 
\end{itemize} 
\vspace{-1mm}
These two properties open a possibility of an iterative solution of the braid relation.

Note additionally that the upper anti-triangularity condition for $R$ significantly simplifies the system of  cubic equations (\ref{YB}). Some of these equations admit complete factorization, and under certain additional assumptions,\footnote{See, e.g., assumption (\ref{cond}) below.} the system of equations from the block $Y^{(m+1)}$ can be solved for the components of the matrix $R^{(m)}$ in a general form.
\medskip

Let us also comment on the equivalence transformations and the parameterization of solutions of the braid relation.

First of all, there are trivial {\em gauge transformations} on the set of solutions of the braid relation. They are generated by arbitrary changes of the basis of space $V$. In general, only diagonal gauge transformations, i.e., rescalings of the basis vectors are compatible with the ansatz. We will always single out a unique representative in each diagonal gauge family of solutions to the braid relation.

There is also a nontrivial family of the {\em twist transformations} of the R-matrices. A general description of this family is missing, but in many cases including the case of our ansatz  a simple subfamily of diagonal twist transformations can be explicitly described (see \cite{Resh,I,IOP3}). These are some of the diagonal transformations of the basis in $V^{\otimes 2}$ that do not reduce to rescaling a basis in $V$. The diagonal twist transformations of the R-matrices found in sections \ref{sec4},~\ref{sec5}  are described in the Appendix.

In the main text we use a parameterization of solutions of the Yang-Baxter equation (\ref{YB}), which is directly related to the eigenvalues of the R-matrices. 
Such a parameterization significantly simplifies the search for solutions to the braid relation, at least within the framework of our ansatz. 
We believe that the convenience of parametrization through eigenvalues of R-matrices is directly related  to the fact that irreducible representations of the braid group $B_3$ in low-dimensional spaces are classified primarily (up to discrete parameter) by the eigenvalues of
 the Artinian braid generators (see \cite{TW,PT}).

\section{Simple examples}\lb{sec3}

In this section, we classify -- within the framework of the ansatz (\ref{ansatz}) -- the R-matrices acting on the tensor squares of spaces $V$ of dimensions 2 and 3. Naturally, in the case $\mbox{dim}\, V=3$, we reproduce those R-matrices from the classification list in  \cite{HMR} that  are composed of anti-triangular blocks. The aim of this section is to provide a base for a subsequent induction on the dimension of space $V$ and to demonstrate the key stages of the iterative block-by-block solution of the braid relation.

\subsection{The case $\mbox{dim}\, V = 2$}
In this case the R-matrix $R\in {\rm Aut}(V^{\otimes 2})$ satisfying the ansatz has 3 blocks.
Let us introduce notation for their components
$$
R^{(2)} =(1), \qquad R^{(3)} =\left(
\begin{array}{cc}
	\alpha_{11} &	\alpha_{12}
	\\
 \alpha_{21} & 	 0
\end{array}\right), \qquad
R^{(4)} =(\beta).
$$

Blocks of the operators $R_1, R_2\in {\rm Aut}(V^{\otimes 3})$ then has a form
\ba
\nn
R_1^{(3)} =(1), \qquad R_1^{(4)} =\left(
\begin{array}{ccc}
 1 & 0 & 0
	\\
 0&	 \alpha_{11} &  \alpha_{12}
	\\
 0&	 \alpha_{21} &  0
\end{array}\right), \qquad
R_1^{(5)} =\left(
\begin{array}{ccc}
	\alpha_{11} & \alpha_{12}& 0
	\\
	\alpha_{21} & 0 & 0
	\\
	0 & 0& \beta
\end{array}\right), \qquad
R_1^{(6)} =(\beta);
\\[10pt]
\nn
R_2^{(3)} =(1), \qquad R_2^{(4)} =\left(
\begin{array}{ccc}
	\alpha_{11} & \alpha_{12}& 0
	\\
	\alpha_{21} & 0 & 0
	\\
	0 & 0& 1
\end{array}\right), \qquad
R_2^{(5)} =\left(
\begin{array}{ccc}
\beta & 0 & 0
\\
0&	\alpha_{11} & \alpha_{12}
\\
0&	\alpha_{21} & 0
\end{array}\right), \qquad
R_2^{(6)} =(\beta).
\ea
Using these blocks, we construct braid relation blocks $Y^{(3)},\dots ,Y^{(6)}$, and obtain the following equations.

The first and last blocks of the braid relation -- in our case, $Y^{(3)}$ and $Y^{(6)}$ --  always vanish identically.

The braid relations in block $Y^{(4)}$ yield a single non-trivial condition in the component $Y^{(4)}_{\,1,1}$:
\be
\lb{c-1}
\alpha_{11}(\alpha_{11}-1+\alpha_{12}\alpha_{21}) =0.
\ee
By virtue of the last condition of the ansatz $\alpha_{11}\neq 0$; due to the invertibility of the R-matrix $\alpha_{12}\alpha_{21}\neq 0$.
Thus, we can choose the following suitable parameterization for the bock $R^{(3)}$ that satisfies condition (\ref{c-1})
\be
\lb{R(3)}
R^{(3)} =\left(
\begin{array}{cc}
	1-v &	x
	\\
	v/x & 	 0
\end{array}\right).
\ee
Here $-v$ is the second eigenvalue of the block $R^{(3)}$ (its first eigenvalue is necessarily equal to 1); $x$ is the diagonal twist parameter of the R-matrix. 

The only non-trivial condition in the block $Y^{(5)}$ arises from the component $Y^{(5)}_{\,1,1}$:
\be
\lb{c-2}
\beta(\beta -1)(\beta +v) =0.
\ee
The invertibility of the R-matrix requires that $\beta\neq 0$. Thus, we have two possibilities: $\beta=1$, or $\beta=-v$. The two corresponding R-matrices for the two-dimensional space $V$ that satisfy the ansatz are of the form:
\be
\lb{dim2-R}
R_{DJ}=\left(
\begin{array}{lr|lr}
	1 &  &  &  
	\\
	 & 1-v & x &
	 \\
	  \hline
	 & v/x &   &
	 \\
	 &  &  &  1
\end{array}
\right),\qquad
R_{KS}=\left(
\begin{array}{lr|lr}
	1 &  &  &  
	\\
	& 1-v & x &
	\\
	\hline
	& v/x &   &
	\\
	&  &  &  -v
\end{array}
\right).
\ee
Here, we have left blank the positions occupied by the  zero components of the R-matrices.

\begin{rem}
	{\rm 
These R-matrices are simplest examples of, respectively,  {\em Drinfeld-Jimbo's} \cite{D,Ji} and {\em Kulish-Sklyanin's} \cite{KS} families of the R-matrices. Both possess quadratic minimal polynomial and, consequently, give rise to representations of the Iwahori-Hecke algebras (see,e.g., \cite{Jo}).  The R-matrices  $R_{DJ}$ and $R_{KS}$ can be assigned the types $GL(2)$ and $GL(1|1)$, respectively (see \cite{GPS}), as they are used in the construction of the quantum (super-)spaces and the quantum (super-)groups of the corresponding types.
}
\end{rem}

\subsection{The case $\mbox{dim}\, V = 3$}

In this case, the R-matrix $R\in {\rm Aut}(V^{\otimes 2})$ satisfying the ansatz consists of five blocks.
The first two blocks coincide with the blocks from the previous case: $R^{(2)}=1$, and $R^{(3)}$ is given by (\ref{R(3)}). 
For the components of the remaining blocks, we introduce the following notation
$$
R^{(4)} =\left(
\begin{array}{ccc}
a_{11} &	a_{12} & a_{13}
	\\
	a_{21} & 	a_{22} & 0
		\\
	a_{31} & 0& 0
\end{array}\right), \qquad
 R^{(5)} =\left(
\begin{array}{cc}
	b_{11} &	b_{12}
	\\
	b_{21} & 	 0
	\end{array}\right), \qquad
R^{(6)} =(c).
$$
The braid relations in the block $Y^{(4)}$ are satisfied thanks to the choice of $R^{(3)}$.  

Conditions on the components of the $R$-matrix block $R^{(4)}$
arise from the braid relations in the block $Y^{(5)}$, which is constructed from the following blocks of operators $R_1$ and $R_2$
$$
R_1^{(5)} =\left(
\begin{array}{cccccc}
	1 &	0 & 0 & 0 & 0&0
	\\
	0& 	\alpha_{11} & 0 & \alpha_{12}& 0&0
	\\
	0 & 0 & a_{11}&0 & a_{12} & a_{13}
		\\
	0& 	\alpha_{21} & 0 & 0& 0&0
		\\
	0 & 0 & a_{11}&0 & a_{22} & 0
		\\
	0 & 0 & a_{31}&0 & 0 & 0
\end{array}\right), \qquad
R_2^{(5)} =\left(
\begin{array}{cccccc}
	a_{11}&	a_{12} & a_{13} & 0 & 0&0
	\\
	a_{21}& a_{22} & 0 & 0& 0&0
	\\
	a_{31}& 0 & 0&0 &0 & 0
	\\
	0& 	0 & 0 & \alpha_{11}& \alpha_{12}&0
	\\
	0 & 0 & 0&\alpha_{21} &0 & 0
	\\
	0 & 0 & 0&0 & 0 & 1
\end{array}\right).
$$
Relations in the components $Y^{(5)}_{12}$, $Y^{(5)}_{14}$, $Y^{(5)}_{41}$  are quadratic with respect to the components of $R^{(4)}$.\footnote{Components $Y^{(5)}_{21}$, $Y^{(5)}_{41}$, $Y^{(5)}_{15}$, $Y^{(5)}_{23}$, $Y^{(5)}_{32}$ give the same relations.} They give rise to two branches of solutions:
\ba
\lb{b-a}
&
\mbox{\em either \,
	} a_{12}=a_{21}=0, &
\\
\lb{b-b} 
&
\mbox{\em or,\,
 if\,
$a_{12}$, or $a_{21}\neq 0$, then}\;\; a_{13}=x^2,\;\; a_{31}=v^2/x^2,\;\; a_{11}=(1-v)(1-a_{22}). &
\ea
Non-trivial relations also appear in the components $Y^{(5)}_{11}$ and $Y^{(5)}_{22}$, respectively,
\ba
\lb{b-c}
a_{11}(a_{11}-1+a_{13}\, a_{31})+a_{12}\,a_{21}&=&0,
\\
\lb{b-d} 
(1-v)(a_{22}-1)(a_{22}+v) +a_{12}\,a_{21}&=&0.
\ea

If the conditions (\ref{b-a}) are satisfied,
then, given that the $2\times 2$ sub-block in the matrix $R^{(4)}$ is not anti-diagonal,
we have $a_{11}\neq 0$. By the same reason $v\neq 1$ in the block $R^{(3)}$ (\ref{R(3)}).
In this case, relations (\ref{b-c}) and (\ref{b-d}) reduce to the form of eqs. (\ref{c-1}) and (\ref{c-2}) considered earlier, and their solution has the form
\be
\lb{R4-1}
R^{(4)} =\left(
\begin{array}{ccc}
1-v' &	0 & x'
	\\
	0 & 	a_{22} & 0
	\\
	v'/x' & 0& 0
\end{array}\right),\quad \mbox{where $a_{22}\in \{1,-v\}$}.
\ee
Here, $-v'$ and $x'$ are the new spectral value and the new diagonal twist parameter of the R-matrix, respectively.\smallskip

Alternatively, if the conditions (\ref{b-b}) are fulfilled, the condition (\ref{b-c}) becomes a consequence of (\ref{b-b}) and (\ref{b-d}). By choosing the appropriate parameterization
$$
a_{22} = q v, \quad a_{12}= g x (1-q v),
$$
where $q$ and $g$ are new parameters, it is easy to find a solution to equations (\ref{b-b}), (\ref{b-d})
\be
\lb{R4-2}
R^{(4)} =\left(
\begin{array}{ccc}
	(1-v)(1-qv) &	g x(1-q v) & x^2
	\\
	(1-v)(1+q)/(g x) & 	q v & 0
	\\
	v^2/x^2 & 0& 0
\end{array}\right).
\ee
Here, the parameters $q$ and $v$ determine the spectrum of the block $R^{(4)}$: ${\rm Spec}\, R =\{1,-v,qv^2\}$; $x$ is the diagonal twist parameter; and $g$ is an inessential gauge parameter describing the relative normalization of the basis vectors in space $V$.
\smallskip

Turning to the braid relations in blocks $Y^{(6)}, \dots ,Y^{(8)}$, for the sake of brevity, we shall no longer write out the explicit forms of the corresponding blocks of the operators $R_1$ and $R_2$. Their structure is evident from the examples already considered.

For the R-matrix block $R^{(4)}$ (\ref{R4-1}), the only essentially new condition arises in the braid relation block $Y^{(6)}$: the vanishing of its component $Y^{(6)}_{\,33}=b_{11}(v-v')$ imposes restriction\,\footnote{Here, we again use the last condition of the ansatz, which requires that $b_{11} \neq 0$.} 
\be 
\lb{v-v}
v'=v
\ee
on the spectral values of the R-matrix. 
All other relations for the components of blocks $R^{(5)}$ and $R^{(6)}$ take the form of eqs. (\ref{c-1}) and (\ref{c-2}). The final solution is as follows:
\be 
\lb{R56-a}
R^{(5)} =\left(
\begin{array}{cc}
	1-v &	x''
	\\
	v/x'' & 	 0
\end{array}\right), \qquad
R^{(6)} \in \{1,-v\},
\ee
where $x''$ is yet another diagonal twist parameter.\smallskip

For the R-matrix block $R^{(4)}$ (\ref{R4-2}), several new relations appear in block $Y^{(6)}$. Namely, we have
\ba
\lb{new-1}
Y^{(6)}_{24}, Y^{(6)}_{43} &\Rightarrow &
\left\{
\begin{array}{l}
 (1-q v) (x b_{21} - q^2 v^2 ) =0,
\\
(1+q)(1-v)(x b_{21} - q^2 v^2 ) =0;
\end{array}	
\right.
\\
\lb{new-2}
Y^{(6)}_{34}, Y^{(6)}_{42} &\Rightarrow &
	\left\{
	\begin{array}{l}
		(1-q v) ( b_{12} - x q^2 v ) =0,
		\\
		(1+q)(1-v)( b_{12} - x q^2 v ) =0;
	\end{array}	
	\right.
	\\
	\lb{new-3}
	Y^{(6)}_{33} &\Rightarrow & \quad\,(1-v)(b_{11}- q v (1+q)(1-q v)) =0;
	\\
	\lb{new-4}
	Y^{(6)}_{22} &\Rightarrow & \quad\,(1-v)\left(b_{12}b_{21}-  v^2 \left(1+q(1+q)(1-q v)\right)\right) =0.
\ea
On the left-hand sides of these relations, we have indicated the components of matrix $Y^{(6)}$ to which the relations given on the right-hand sides correspond.

Once again, factor $(1-v)$ is non-zero; otherwise, the matrix $R^{(3)}$ would be anti-diagonal. For the same reason, we cannot simultaneously set $(1+q)=0$ and $(1-q v)=0$ in the matrix $R^{(4)}$ (\ref{R4-2}). Hence,  the relations (\ref{new-1})-(\ref{new-4}) imply the vanishing of the rightmost factors on their left-hand sides. Solving eqs.(\ref{new-1})-(\ref{new-3}) we find
\be 
\lb{R5-b}
R^{(5)} =\left(
\begin{array}{cc}
	q v (1+q)(1-q v) &	x q^2 v
	\\
	q^2 v^2/x & 	 0
\end{array}\right),
\ee
and then, eq.(\ref{new-4}) constraints the parameters $q$ and $v$:
\be 
\lb{u-v}
(1+q+q^2)(1-q^2 v)=0.
\ee

Passing to the braid relations in blocks $Y^{(7)}$ and $Y^{(8)}$, we do not obtain new restrictions on the R-matrix parameters but merely fix the last of its blocks: 
\be 
\lb{R6-b}
R^{(6)}= q^4 v^2.
\ee

To summarize, below we collect all the R-matrices satisfying the ansatz and acting on spaces $V$ of dimension 3:
\ba
\lb{dim3-R}
\phantom{a}\hspace{-50mm}
R_{A}=\left(
\begin{array}{lcr|lcr|lcr}
\scriptstyle	1 &  &  &  & & & & &
	\\
	 &\scriptstyle 1-v &  & \scriptstyle 1 & & & & &
	\\
	 &  &\scriptstyle 1-v  &  & & &\scriptstyle 1 & &
	\\
	\hline
	&\scriptstyle v &  & & & & & &
	\\
	&  &  & &\scriptstyle \lambda_1 & & & &
	\\	
	&  &  & & &\scriptstyle 1-v &  &\scriptstyle 1 &
	\\
	\hline
		&  &\scriptstyle v & & & & & &
	\\
		&  &  & & &\scriptstyle v & & &
	\\
		& &  & & & & & &\scriptstyle \lambda_2
\end{array}
\right)\! ,\qquad\mbox{where}\quad \lambda_{1,2}\in\{1,-v\};
\\[8pt]
R_{B}=\left(
\begin{array}{lcr|lcr|lcr}
	\scriptstyle	1 &  &  &  & & & & &
	\\
	&\hspace{-2mm}\scriptstyle 1-v &  & \scriptstyle 1 & & & & &
	\\
	&  &\hspace{-4mm}\scriptstyle (1-v)(1-q v)  &  &\hspace{-2mm}\scriptstyle 1-q v & &\scriptstyle 1 & &
	\\
	\hline
	&\hspace{-2mm}\scriptstyle v &  & & & & & &
	\\
	&  & \scriptstyle (1- v)(1+q) & &\hspace{-2mm}\scriptstyle q v & & & &
	\\	
	&  &  & & &\hspace{-4mm}\scriptstyle q v (1- q v)(1+q) & & \scriptstyle q^2 v &
	\\
	\hline
	&  &\scriptstyle v^2 & & & & & &
	\\
	&  &  & & &\scriptstyle q^2v^2 & & &
	\\
	& &  & & & & & &\scriptstyle q^4 v^2
\end{array}
\right)\!, \; \mbox{where}\; {\scriptstyle (1-q^2 v)(1+q+q^2)=0}.
\ea
We do not show zero components in these R-matrices. We have also set all gauge and twist parameters of the R-matrices to unity; recovering them would be straightforward. 

The R-matrix $R_A$ correspond to the first branch  (\ref{b-a}) of the solution to the braid relation. It is given by formulas (\ref{R(3)}), (\ref{R4-1}), (\ref{v-v}), and (\ref{R56-a}). The R-matrix $R_B$ represent the second branch
(\ref{b-b}), given by formulas (\ref{R(3)}), (\ref{R4-2}), and (\ref{R5-b})-(\ref{R6-b}).

\begin{rem}
	{\rm   For different choices of the parameters $\lambda_1$ and $\lambda_2$, $R_A$ yields either the Drinfeld–Jimbo $R$-matrix of type $GL(3)$   ($\lambda_1=\lambda_2=1$) or the Kulish-Sklyanin $R$-matrices of types $GL(2|1)$ ($\lambda_1=1, \lambda_2=-v$ or $\lambda_1=-v, \lambda_2=1$) and $GL(1|2)$ ($\lambda_1=\lambda_2=-v$). All these $R$-matrices give rise to representations of the Iwahori–Hecke algebras.
		
		The R-matrix $R_B$ can be obtained from the universal R-matrix of the quantum group $U_q(\frak{sl}(2))$
		evaluated in the tensor squares of the  spin-1 representation (the case $v=1/q^2$) or  the nilpotent representation at $q=\sqrt[3]{1}$ (the case $1+q+q^2=0$). For  details and  references, see remark
		\ref{rem5.2} below.		
	}
\end{rem}

\section{Construction of the pre-equatorial blocks}\lb{sec4}
We call the largest block $R^{(N+1)}$ ($N=\mbox{dim}\, V$)  of a degree-preserving R-matrix the {\em equatorial} block.
The preceding blocks $R^{(2)},\dots ,R^{(N)}$ and   the subsequent blocks  $R^{(N+2)},\ldots ,R^{(2N)}$ are called, respectively, {\em pre-equatorial} and {\em post-equatorial}.\smallskip

Let us introduce a set of $i\times i$ matrices, $J^{[i]}$, $i\in {\Bbb N}$, depending on two complex  parameters $q, v\in{\Bbb C}\setminus \{0\}$:
\be
\lb{JJ}
J^{[i]}_{ab}(q,v):=q^{(a-1)(b-1)} v^{(a-1)}\,
\Bigl[ { i-b\atop a-1}\Bigr]_q\,
(q^{(b-1)}v;q)_{i-a-b+1}\, ,\quad a,b=1,2,\dots ,i.
\ee
Here the superscript symbol $[i]$ in the notation  $J^{[i]}$ indicates the size of the matrix. In the right hand side of the formula (\ref{JJ}) we use standard notation for the $q$-Pochhammer symbol
\be
\lb{Poch}
(v;q)_m := \prod_{k=0}^{m-1} (1- q^k v),
\ee
and for the Gaussian binomial coefficients 
\be
\lb{binomial}
\Bigl[ { m\atop k}\Bigr]_q := {(q^{m-k+1};q)_{k}\over (q;q)_k} = {[m]_q!\over [k]_q!\, [m-k]_q!}, \qquad [i]_q!:= \prod_{k=1}^i [k]_q,\qquad [k]_q := {1-q^k\over 1-q}.
\ee
Since $[{m\atop k}]_q=0$ for $k>m$ all matrices $J^{[i]}$ turn out to be
upper anti-triangular.\smallskip

Based on our experience in constructing R-matrices in low-dimensional spaces, we formulate the following 
\begin{prop}
	\lb{prop1}
In search of an R-matrix satisfying the ansatz (\ref{ansatz}), we define
its pre-equatiorial blocks $R^{(2)},\dots ,R^{(n+1)}$, $n<N$, by the formula
\be
\lb{pre-eq}
R^{(i+1)}_{a,b} = x^{(b-a)}J^{[i]}_{a,b}(q,v), \quad i=1,\dots , n.
\ee
Here $x\in {\Bbb C}\setminus \{0\}$ is the diagonal twist parameter and $q, v\in {\Bbb C}\setminus \{0\}$ are the essential parameters related to the eigenvalues of the R-matrix.

With this choice   the blocks $Y^{(3)}, \dots , Y^{(n+2)}$ of  the braid relation  vanish.
Equations in the block $Y^{(n+3)}$ then admit following solutions.
\ba
\lb{pre-eq-1}
\mbox{$a)$} && R^{(n+2)}_{a,b} = g_{a,b}\, x^{(b-a)}J^{[n+1]}_{a,b}(q,v),\hspace{29mm}
\ea
where $g_{1, a}=g\in  {\Bbb C}\setminus \{0\}$, $g_{a ,1 }=g^{-1}$ if $a\notin\{1,\, n+1\}$, and $g_{a,b}=1$ otherwise. Here $g$ is the gauge parameter of the R-matrix.

Under additional restriction on the parameters $q$, $v$
\be 
\lb{restrict}
\mbox{$b_1)$}\;\;1-q^{n-1} v=0, \;\;\; \mbox{or\;\;\; $b_2)$}\;\;  \mbox{$q$ is the
	primitive root of unity of degree\, $n$.}
\ee
there exists another solution
\ba
\lb{pre-eq-2a}
\mbox{$b)$} && R^{(n+2)}_{a,b} = x^{(b-a)}J^{[n+1]}_{ab}(q,v)\;\;\forall\, a,b\in\{2,3,\dots ,n\},
\\
\lb{pre-eq-2b}
&& R^{(n+2)}_{1,a} =R^{(n+2)}_{a,1} = 0, \;\; \forall\, a\in\{2,3,\dots ,n\},
\\
\lb{pre-eq-2c}
&& R^{(n+2)}_{1,1} =1-v',\quad R^{(n+2)}_{1,n+1} = x', \quad 
R^{(n+2)}_{n+1,1} = v'/x'.
\ea
In these formulas  $x'\in  {\Bbb C}\setminus \{0\}$ and $v'\in  {\Bbb C}\setminus \{0\}$ are additional diagonal twist  and  eigenvalue parameters, respectively.
\end{prop}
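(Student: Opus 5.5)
Since $R$ is degree-conserving, the block $Y^{(m)}$ is assembled only from the blocks $R^{(k)}$ with $k\le m-1$. This is because $R_1$ acting on $e_{i_1}\otimes e_{i_2}\otimes e_{i_3}$ of total degree $m$ sees the degree $i_1+i_2\le m-1$ of its first two factors. I will therefore argue by induction on $n$, with the cases $n=1,2$ given by (\ref{R(3)}), (\ref{R4-2}) and (\ref{R5-b}) (after restoring gauge and twist) as the base. The first step removes the twist: conjugation by the diagonal operator $e_i\mapsto x^{i}e_i$ on $V$ does not reduce to it, so I will instead use the diagonal twist $e_i\otimes e_j\mapsto x^{(i-j)/2}$-type rescaling on $V^{\otimes2}$ together with the gauge $g$. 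This reduces (\ref{pre-eq}) and (\ref{pre-eq-1}) to $x=g=1$, and it suffices to check $Y^{(m)}=0$ for $R^{(i+1)}=J^{[i]}$.

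The second step is the vanishing of $Y^{(m)}$ for $m\le n+2$. Index the basis of $V^{(3;m)}$ by triples $(i_1,i_2,i_3)$ and write the matrix element of $R_1R_2R_1-R_2R_1R_2$ as a sum over intermediate triples. Using (\ref{JJ}), each summand is a product of $q$-powers, $v$-powers, Gaussian binomials and $q$-Pochhammer symbols. Because of anti-triangularity the sums are finite and of a fixed shape, and the identity to prove becomes a $q$-analogue of a Vandermonde/Chu convolution, namely the $q$-Chu--Vandermonde and $q$-Pfaff--Saalschütz summations. A more conceptual alternative, which I would try first to avoid heavy bookkeeping, is to identify $J^{[i]}$ with the restriction to degree $i+1$ of the universal $U_q(\frak{sl}(2))$ R-matrix on a Verma module of highest weight parametrized by $v$. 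Here the block $V^{(2;i+1)}$ corresponds to weight vectors $F^{a}w\otimes F^{b}w$ with $a+b=i-1$, and the factor $(q^{b-1}v;q)$ comes from the action of $E$. The braid relation then holds because the universal R-matrix satisfies Yang--Baxter, and all blocks of degree $\le n+1$ only involve vectors $e_j$ with $j\le n$, for which no truncation has occurred.

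The third step solves the block $Y^{(n+3)}$ for the unknown $R^{(n+2)}$. As noted in Section~\ref{subsec2.2}, these equations are quadratic-linear in the entries of $R^{(n+2)}$, while all other entries are already fixed by the induction. I will mimic the $\dim V=3$ computation. The components analogous to $Y^{(5)}_{12},Y^{(5)}_{14},Y^{(5)}_{41}$ factorize and give a dichotomy: either the first row and column of $R^{(n+2)}$ between positions $2$ and $n$ vanish, or they do not. In the branch where they do not, the linear equations propagate row by row and force $R^{(n+2)}_{ab}=J^{[n+1]}_{ab}$ up to the gauge factors $g_{ab}$, which gives case $a)$. In the branch where they vanish, the interior entries are forced to be $J^{[n+1]}_{ab}$ for $2\le a,b\le n$ by the equations that do not involve the first row and column. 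The corner $2\times2$ system then reduces to (\ref{c-1}), giving (\ref{pre-eq-2c}). The remaining equations coupling the corner to $R^{(n+1)}$ become consistent only when the coefficients that would otherwise force the first row and column to be nonzero vanish. I expect those coefficients to be $(q^{n-1}v;q)_1$ and $[n]_q$, so that their vanishing gives exactly $b_1)$ or $b_2)$.

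The main obstacle is the third step, since the index bookkeeping in $Y^{(n+3)}$ is heavy. In particular, it requires showing that the obstruction in branch $b)$ factorizes precisely as $(1-q^{n-1}v)[n]_q$, with $[n]_q=0$ being equivalent to $q$ a primitive $n$-th root of unity, given genericity inherited from lower blocks. I would first compute the components $Y^{(n+3)}_{1,k}$ and $Y^{(n+3)}_{k,1}$, where the first row of $R^{(n+2)}$ enters linearly, and extract that factor.
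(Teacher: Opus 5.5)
Your third step is the paper's own argument. The entries of $R^{(n+2)}$ off the first row and column satisfy a triangular linear system coming from $Y^{(n+3)}$. The relations containing the first row and column factorize into the two branches. In branch $b)$ the corner reduces to (\ref{c-1}).

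The real difference is how you obtain $Y^{(3)}=\dots=Y^{(n+2)}=0$. The paper bootstraps: it inducts on $n$, with the existence half of case $a)$ at $g=1$ as the inductive step. Everything therefore rests on the one-block-at-a-time solution, which the paper only sketches. Your Verma-module route instead identifies $J^{[i]}$, up to twist, with the degree-$(i+1)$ block of the $U_q(\frak{sl}(2))$ R-matrix on $M\otimes M$; the paper mentions this identification only in the remark after the proposition. This gives every block at once, and for all $q,v$ by polynomial continuation. Existence in case $a)$ then comes for free, and the block analysis is needed only to see what else can occur. The price is an explicit check that $J^{[i]}$ really is that block (or the $q$-summations), which the paper avoids. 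Your twist reduction is sound: with $he_i=ie_i$, conjugating $R$ by $x^{(h\otimes 1-1\otimes h)/2}$ amounts to conjugating both $R_1$ and $R_2$ by the same $x^{h_1-h_3}$ on $V^{\otimes 3}$, so every $Y^{(m)}$ is preserved.

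Three points to tighten.

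First, for $b)$ you only argue that (\ref{restrict}) is necessary, whereas the statement asserts existence. Sufficiency follows from what you already have. If the first row and column vanish, $R_1$ and $R_2$ preserve two subspaces. One is the span $S$ of $e_1\otimes e_1\otimes e_{n+1}$, $e_1\otimes e_{n+1}\otimes e_1$, $e_{n+1}\otimes e_1\otimes e_1$. The other is the span of the remaining degree-$(n+3)$ triples, all of whose entries are $\le n$. So in branch $b)$ no equation involves both the corner and $R^{(n+1)}$. On $S$ the relation is exactly (\ref{c-1}). On the complement it differs from the relation already satisfied in case $a)$ with $g=1$ only by paths through $S$. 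Each such path carries a factor $J^{[n+1]}_{1,a}J^{[n+1]}_{b,1}$ with $2\le a,b\le n$. Since $J^{[n+1]}_{1,a}=(q^{a-1}v;q)_{n+1-a}$ contains $1-q^{n-1}v$, and $J^{[n+1]}_{b,1}$ contains $\bigl[{n\atop b-1}\bigr]_q$, these terms vanish under $b_1)$ and $b_2)$ respectively.

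Second, your remark that $[n]_q=0$ amounts to primitivity is correct, and the mechanism is the ansatz. Suppose $q$ is a primitive $d$-th root of unity with $d\mid n$, $1<d<n$. Then $J^{[2d]}_{d,d}$ contains $[d]_q=0$ while $J^{[2d]}_{d,d+1}$ and $J^{[2d]}_{d+1,d}$ are nonzero, so $R^{(2d+1)}$ (present since $2d\le n$) has an anti-diagonal $2\times 2$ sub-block. Likewise (\ref{cond}), which the paper needs for the triangular system, is forced by the ansatz: $R^{(n+1)}_{1,1}=(v;q)_{n-1}$, while $R^{(n+1)}_{1,n}R^{(n+1)}_{n,1}\neq 0$. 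Also, your ``non-vanishing'' branch contains the paper's degenerate case $g=0$ (first row zero, first column not). Under $b_2)$ this case is not of the form $a)$, so your forcing claim there needs $g\neq 0$.

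Third, for the base case compare with $J^{[2]}$ and $J^{[3]}$ directly. (\ref{R5-b}) is a post-equatorial block of the $N=3$ matrix and plays no role. (\ref{R4-2}) as printed lacks a factor $v$ in its $(2,1)$ entry, as (\ref{b-d}) shows. No gauge or twist repairs this, since $R_{1,2}R_{2,1}$ is invariant under both.
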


\begin{rem}
{\rm
Extending the solution (\ref{JJ}), (\ref{pre-eq}) to arbitrarily large values of the index $n$ we can view it as an R-matrix acting on the tensor square of the infinite-dimensional space $V$. This R-matrix reproduces  (up to a diagonal twist transformation) the R-matrix constructed in \cite{JK}, Theorem 7, by specializing of the  universal R-matrix of the algebra $U_q({\frak{sl}(2)})$ to the tensor square of Verma module. In \cite{Will}, this R-matrix was used to construct a unified knot invariant, 
also known as the two-variable colored Jones invariant \cite{H}.
}
\end{rem}

\noindent {\bf Proof.}~
The assertion that the preequatorial blocks (\ref{pre-eq}) satisfy the braid relations in blocks $Y^{(3)}, \dots , Y^{(n+2)}$ follows by induction on $n$. The base for induction is provided by the examples from the previous section. The inductive step will be proven once we verify case $a)$ of the proposition.\smallskip

We do not provide a detailed verification of the cases $a)$ and $b)$. Instead, we outline the main steps of the considerations.

The braid relations from the block $Y^{(n+3)}$ contain a triangular linear system of equations for the components of the matrix $R^{(n+2)}$ that do not belong to its first column and first row. This system has a unique solution if~\footnote{This condition is sufficient but not necessary.}
\be
\lb{cond}
(v;q)_{n-1}= (1-v)(1-q v)\dots (1-q^{n-2}v)\neq 0,
\ee
and its solution is given by  formula (\ref{pre-eq-2a}) which is common for these components of $R^{(n+2)}$ in both cases -- $a)$ and $b)$ -- in the proposition.

The conditions imposed on the components of the first column and first row of $R^{(n+2)}$ within block $Y^{(n+3)}$ are quadratic-linear. Some of them factorize, which finally gives two branches of the solution.
\begin{itemize}
	\item[1)] If at least one of the components $R^{(n+2)}_{1,a}$, $ R^{(n+2)}_{a,1}$, $a=2,\dots , n$, is different from zero, then
	\be 
	\lb{aga}
R^{(n+2)}_{1,n+1}=x^n, \;\;\; \mbox{and}\;\;\; R^{(n+2)}_{n+1,1}=x^{-n} v^n.
	\ee 
	\item[2)] Otherwise
	$
	R^{(n+2)}_{1,a}=R^{(n+2)}_{a,1}=0$, $\forall\, a=2,\dots ,n$,
	and there are no restrictions on the values of $R^{(n+2)}_{1,n+1}$ and $R^{(n+2)}_{n+1,1}$.
\end{itemize}

Branch $1)$ corresponds to the solution $a)$ in the proposition, In this case,  substituting (\ref{aga}) into the braid relations for the block $Y^{(n+3)}$ yields a system of linear homogeneous equations for the components $R^{(n+2)}_{1,a}$, $a=2,\dots ,n$. Under condition (\ref{cond}), the system has a one-parameter family of solutions, given by formula (\ref{pre-eq-1}), where $g$ is the parameter of this family. Assuming $g \neq 0$, the components  $R^{(n+2)}_{a,1}$, $a=2,\dots ,n$, and then, $R^{(n+2)}_{1,1}$ can be uniquely recovered from the remaining relations. The result is presented in formula (\ref{pre-eq-1}).

In the case $g=0$,  the braid relations in block $Y^{(n+3)}$ are consistent only under additional restrictions (\ref{restrict}) on the parameters $q$ and $v$.
In the following we will constrain ourselves to considering the case  $g\neq 0$.\footnote{Note that case $g=0$ with additional conditions (\ref{restrict}) can also be described by formula (\ref{pre-eq-1}). Case $b_1)$ in (\ref{restrict}) corresponds to the choice $g=1$ in (\ref{pre-eq-1}), whereas in case $b_2)$
one must set $g=[n]_q$ in (\ref{pre-eq-1}) before imposing the constraint (\ref{restrict}).}
\smallskip

Branch $2)$ corresponds to the solution $b)$ in the proposition, In this case, 
the system of braid relations in the block $Y^{(n+3)}$ is consistent only if additional constraints  (\ref{restrict})  are imposed on the parameters $q$ and $v$. In this scenario, the matrix $R^{(n+2)}$ takes a block-diagonal form with $2\times 2$ sub-block in the first and last rows and columns. 
Braid relations, together with the invertibility condition on $R$,  determine the following parameterization for the components of this block:\footnote{Choosing the $2 \times 2$ matrix that is identically zero also satisfies the braid relations. This solution will be used in the next section in the proof of proposition \ref{prop2}.}
$$
\left(
\begin{array}{cc}
	1-v' & x'
	\\
	v'/x' & 0
\end{array}
\right),  \qquad x', v' \in {\Bbb C}\setminus\{0\}.
$$
It is precisely these formulas that are given in (\ref{pre-eq-2c}).
\hfill$\blacksquare$\medskip

\section{R-matrices in finite dimensional spaces. }\lb{sec5}
\subsection{Type $GL(2)$ R-matrices  in space $V$ of any dimension} \lb{sec5a}

So far, we have discussed the pre-equatorial blocks of the R-matrix. Solving the braid relation for the post-equatorial blocks imposes additional restrictions on the eigenvalue parameters $q$ and $v$. The form of these conditions depends on the choice of solution branch from proposition  \ref{prop1}. Let us first consider branch a) (\ref{pre-eq-1}).
\begin{prop}
	\lb{prop2}
	Consider a conservative upper anti-triangular operator $R\in {\rm Aut}(V^{\otimes 2})$, $\mbox{dim}\, V=N$, whose pre-equatorial blocks have the form
	 \be
	 \lb{gl2-1}
	 R^{(i+1)}_{a,b} = J^{[i]}_{a,b}(q,v), \qquad\;\;\; i=1,\dots ,N, \qquad\;\, a,b=1,\dots ,i.
	 \ee
	 This operator admits a unique extension to an R-matrix in which the post-equatorial blocks are defined as follows:
	 \be
	 \lb{gl2-2}
	 R^{(N+i+1)}_{a,b} = J^{[N+i]}_{a+i,b+i}(q,v), \quad i=1,\dots ,N-1, \;\;\; a,b=1,\dots ,N-i.
	 \ee
and the parameters $q$ and $v$ are constrained by one of the following conditions:
\be 
\lb{restrict-2}
\mbox{$a_1)$}\;\;
1-q^{N-1} v=0, \;\;\; \mbox{or\;\;\; $a_2)$}\;\; 
	\mbox{\rm $q$ is the
		primitive root of unity of degree $N$.}
\ee	

Each matrix $R^{(i+1)}$, $i=1,\dots ,N$, has the set of eigenvalues
\be 
\lb{eigen-1}
\lambda_k := q^{k(k-1)\over 2}(-v)^k,\quad k=0,1,\dots,i-1.
\ee
 
In the case $a_1)$ blocks of the R-matrix are symmetric with respect to the equator
 \be
 \lb{symm-eq}
\left( R^{(N-i+1)}-R^{(N+i+1)}\right)\vert_{v=q^{1-N}}=0, \quad i=1,\dots , N-1,
 \ee
 and so, in a generic situation (i.e., if all eigenvalues $\lambda_k$ are pairwise different) the spectrum of $R$ in the case $a_1)$ is
 \be 
\lb{specR-a1}
{\rm Spec\,} R = \left\{  \lambda_k^{\# (2N-2k-1)}   \right\}_{k=0,\dots N-1}.
\ee
Here, the superscript symbol in the notation $\lambda^{\# m}$ indicates the multiplicity of the eigenvalue $\lambda$ in the spectrum of the operator $R$.

In the case $a_2)$ the post-equatorial blocks of the R-matrix have the following eigenvalues
\be
\lb{eigen-2}
{\rm Spec\,} R^{(N+i+1)} = \left\{  \lambda_k   \right\}_{k=i,i+1,\dots N-1}.
\ee
and so, for generic values of the parameter $v$ the spectrum of $R$ in the case $a_2)$ is
\be 
\lb{specR-a2}
{\rm Spec\,} R = \left\{  \lambda_k^{\# N}   \right\}_{k=0,\dots N-1}.
\ee
\end{prop}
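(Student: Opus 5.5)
The proof splits into three independent parts: the eigenvalues of the pre-equatorial blocks, the existence and uniqueness of the post-equatorial extension, and the spectra in the two cases.

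\emph{Spectrum of the blocks.} Each block $J^{[i]}$ is upper anti-triangular, so its characteristic polynomial is governed by anti-diagonal pairings. I would instead exhibit eigenvectors directly. By induction on $i$, I would show that $J^{[i]}$ is similar to an upper triangular matrix with diagonal entries $\lambda_0,\dots,\lambda_{i-1}$. A practical route is to compute $\det(J^{[i]}-\lambda)$ from the $q$-binomial structure of (\ref{JJ}) and check that it factorizes as $\prod_k(\lambda_k-\lambda)$. One can also use the known origin of $J^{[i]}$: by the remark after Proposition \ref{prop1} it is a weight block of the $U_q(\frak{sl}(2))$ Verma-module R-matrix, whose eigenvalue on the $k$-th summand of the degree-$(i+1)$ weight space is $q^{k(k-1)/2}(-v)^k$, up to twist. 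For the paper itself I would settle for verifying $\det J^{[i]}=\prod_{k<i}\lambda_k$ together with ${\rm tr}$ and the recursion in $i$.

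\emph{Post-equatorial extension.} Proposition \ref{prop1} with $n=N-1$, applied with $g=1$, yields the pre-equatorial blocks and the equator. Beyond the equator, block $Y^{(N+j+2)}$ no longer contains basis vectors with an index exceeding $N$. Concretely, $R^{(N+j+1)}$ in dimension $N$ is obtained from the infinite-dimensional solution by deleting the rows and columns that would involve $e_{N+1},e_{N+2},\dots$. Since $J^{[N+i]}$ is upper anti-triangular, the lower-right corner $J^{[N+i]}_{a+i,b+i}$ is again anti-triangular, and this gives (\ref{gl2-2}).

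Consistency of this truncation requires that the truncated $R_1R_2R_1-R_2R_1R_2$ coincide with the corresponding block of the infinite solution. That block vanishes, so consistency holds exactly when the entries coupling retained vectors to deleted ones do not feed back into the retained block. The coupling entry is $J^{[N+1]}_{1,N+1}$-type terms times coefficients containing $(q^{b-1}v;q)$ factors and Gaussian binomials $[{m\atop N}]_q$. These vanish precisely when $1-q^{N-1}v=0$, which kills the Pochhammer factor, or when $q$ is a primitive $N$-th root of unity, which kills $[{N\atop k}]_q$ for $0<k<N$. The condition on the first $Y$-block past the equator is the one that produces (\ref{restrict-2}), mirroring the last step of Proposition \ref{prop1}. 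Uniqueness follows because, exactly as in Proposition \ref{prop1}, each block $Y^{(m+1)}$ is a triangular linear system in the entries of $R^{(m)}$ away from the first row and column, solvable uniquely when (\ref{cond}) holds. The first row and column are now fixed by the $b_1$/$b_2$ dichotomy.

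\emph{Spectrum of $R$.} Given (\ref{gl2-2}), the corner of $J^{[N+i]}$ is triangular in the same flag, so its eigenvalues are $\lambda_i,\dots,\lambda_{N+i-1}$ restricted to $N-i$ values. Under $a_2)$ we have $q^N=1$, and $\lambda_{k+N}$ does not reduce to $\lambda_k$. A direct computation using the upper-triangularized form gives eigenvalues $\lambda_k$ for $k=i,\dots,N-1$, which is (\ref{eigen-2}). Counting then gives each $\lambda_k$ appearing in blocks $i\ge k+1$ and in post-blocks $j\le k$, a total of $N$ times, which is (\ref{specR-a2}).

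Under $a_1)$ I would prove the equator symmetry (\ref{symm-eq}) directly by substituting $v=q^{1-N}$ and reindexing $a\mapsto a+i$ in (\ref{JJ}). Then $(q^{b+i-1}v;q)$ becomes $(q^{b+i-N};q)$, and the symmetry reduces to the $q$-binomial reflection $[{m\atop k}]_q=[{m\atop m-k}]_q$. The spectrum of the post-blocks then equals that of the mirrored pre-blocks, and the count $(N-k)+(N-k-1)=2N-2k-1$ gives (\ref{specR-a1}).

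The main obstacle is the truncation consistency, i.e.\ showing that exactly (\ref{restrict-2}) makes the coupling terms vanish. I would first test it on $N=3$ against $R_B$, where it reproduces $(1-q^2v)(1+q+q^2)=0$.
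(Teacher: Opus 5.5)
Your existence argument takes a different route from the paper's, but its key step is misstated; the uniqueness and spectrum steps also have gaps.

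\textbf{Comparison of routes.} The paper extends $R$ by zero to $W\supset V$ with $\dim W=2N$. Each post-equatorial block then becomes the middle of a pre-equatorial block of $R_{ext}$ with vanishing border, and the paper reruns the analysis of Proposition \ref{prop1}, so that form, uniqueness and (\ref{restrict-2}) all come from one machinery. You instead cut down the infinite solution $\tilde R$ with blocks $J^{[m]}$, $m\ge 1$. This $\tilde R$ satisfies the braid relation for all $q,v$, by Proposition \ref{prop1} at generic parameters and polynomiality of the entries. Your route gives existence more directly and conceptually than the paper's, once the crux is stated correctly.

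\textbf{The crux as written is wrong.} The two conditions do not kill the same coupling entries, contrary to what you assert; the entry $J^{[N+1]}_{1,N+1}$ you point to is just $1$. In $J^{[N+i]}$ the retained indices are $i<a\le N$, and couplings to indices $\ge N+1$ vanish automatically by anti-triangularity.
\begin{itemize}
\item For $a\le i<b\le N$, the factor $(q^{b-1}v;q)_{N+i-a-b+1}$ contains $1-q^{N-1}v$, so these entries vanish under $a_1)$.
\item For $b\le i<a\le N$, the binomial $[{N+i-b\atop a-1}]_q$ has $1-q^N$ in its numerator but not in its denominator, so these entries vanish under $a_2)$.
\end{itemize}
In each case the other block survives. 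For example, with $N=2$, $q=-1$ one has $J^{[3]}_{2,1}=v(1+q)(1-v)=0$ but $J^{[3]}_{1,2}=1-qv=1+v\neq0$, and the reverse holds at $qv=1$.

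One-sided vanishing is enough. The retained span is then an invariant subspace in one case and a quotient by an invariant subspace in the other, for $\tilde R$ and hence for $\tilde R_1,\tilde R_2$ on $\tilde V^{\otimes 3}$, so the braid relation descends. This gives sufficiency only. Your ``exactly when'' still has to come from the first post-equatorial block of relations, as in the paper.

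\textbf{Uniqueness.} You quote unique solvability from Proposition \ref{prop1} under (\ref{cond}). For $R^{(N+i+1)}$ that condition reads $(v;q)_{N+i-2}\neq 0$, which contains the factor $1-q^{N-1}v$ once $i\ge 2$. So in case $a_1)$ the condition fails, and the truncated linear system has to be analysed directly.

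\textbf{Spectra.} The paper only reports these as observed experimentally, so any proof here is extra, but yours has holes.
\begin{itemize}
\item Determinant plus trace does not determine the spectrum for $i\ge 3$, so that plan proves nothing.
\item ``Triangular in the same flag'' is not justified as stated.
\item The Verma-module identification is a viable route to (\ref{eigen-1}). The sub/quotient structure above then shows that the characteristic polynomial of $R^{(N+i+1)}$ divides that of $J^{[N+i]}$. This reduces (\ref{eigen-2}) to identifying which $N-i$ of the $\lambda_k$ survive, a step you merely assert.
\item (\ref{symm-eq}) does not follow from binomial reflection alone. At $v=q^{1-N}$ both Pochhammer symbols become ratios of $(q;q)$-factorials, and then the product with the binomial is the same on both sides up to sign and a power of $q$. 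That power matches once the prefactors $q^{(a-1)(b-1)}v^{a-1}$ are included.
\end{itemize}
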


\begin{rem}
	\lb{rem5.2}{\rm
The multiplicities of the eigenvalues in the spectra of the R-matrices   (\ref{specR-a1}) and (\ref{specR-a2}) coincide with the dimensions of the irreducible components in the decompositions of the tensor squares of the standard spin representations and the typical nilpotent representations of the algebra $U_q(\frak{sl}(2))$, respectively (see, e.g., \cite{A}, or \cite {RT}, eq.(7.4.1) and  \cite{GPT}, lemma 30).  This  clearly
indicates that the numerical R-matrices in cases $a_1)$ and $a_2)$ are related to realizations of the universal R-matrix of $U_q(\frak{sl}(2))$ 
on the tensor squares of the aforementioned representations. That is indeed the case.

In case $a_1)$, the R-matrix (\ref{gl2-1}), (\ref{gl2-2}) is equivalent
(up to simple gauge, diagonal twist and scaling transformations) 
to the R-matrix used in \cite{RT} (section 7.4) to define a remarkable family of link invariants --- coloured Jones polynomials.
In case $a_2)$, the R-matrix (\ref{gl2-1}), (\ref{gl2-2}) is equivalent (up to gauge, diagonal twist and scaling transformations)  to the R-matrices considered in \cite{ADO} and \cite{M2}, where they were used to construct the so-called ADO link invariants, also known as colored Alexander polynomials. 
Interrelations between the two-variable Jones invariant, coloured Jones invariants, ADO invariants, and also the Kashaev's quantum dilogarithm invariants of knots \cite{K} are described in \cite{MM,CGP,Will}.
}
\end{rem}

\begin{rem}{\rm
			The R-matrix described in proposition \ref{prop2} is a unique element of its gauge and  diagonal twist family. 
			It is distinguished by the property that the sum of the components of this R-matrix along any of its columns is equal to 1. In case $a_1)$, these constant R-matrices  are equivalent  to the limiting case of depending on the so-called {\em spectral parameter}\,\footnote{It should not be confused with the spectral value of the constant R-matrix.} R-matrices  constructed in \cite{Mang}. These R-matrices have recently applied for the construction of the integrable stochastic processes \cite{BP}. }
\end{rem}

\noindent {\bf Proof.}~
We can construct the post-equatorial blocks of the R-matrix, drawing on our experience with the pre-equatorial blocks. To this end, we consider embedding of the space $V$ into a space $W$ of twice the dimension, $\mbox{dim}\, W = 2 \mbox{dim}\, V$, choosing a linear basis in $W$ such that its first $N$ elements are the basis vectors of the space $V$. 
Using the lexicographical ordering of the basis vectors in $W^{\otimes 2}$, we define a linear operator $R_{ext} \in \mbox{End}(W^{\otimes 2})$ that coincides with $R$ when acting on the basis vectors of the space $V^{\otimes 2}$ (i.e., $R_{ext}|_{V^{\otimes 2}} = R$) and vanishes identically on the remaining basis vectors of the space $W^{\otimes 2}$. By construction, $R_{ext}$ is degree conserving and upper anti-triangular, and it satisfies the braid relations if and only if the operator $R$ satisfies them. Moreover, thanks to our choice $\mbox{dim}\, W = 2\mbox{dim}\, V$, all the post-equatorial blocks of the operator $R_{ext}$ vanish identically. 
Thus, the braid relations for the post-equatorial block of the operator $R$ -- $R^{(N+i+1)}$ -- take on the already familiar form of the braid relations for the pre-equatorial block $R_{ext}^{(N+i+1)}$ of the operator $R_{ext}$, subject to the additional condition that the first and last $i$ columns and rows of this block are identically zero.

This observation, along with the considerations in proof of the proposition \ref{prop1} (see the footnote at the end of the proof) justifies formula (\ref{gl2-2}) for the post-equatorial blocks of $R$. Conditions (\ref{restrict-2}) on the parameters $q$ and $v$ appear for the first time in consideration of the braid relations in the block $Y_{ext}^{(N+3)}$ and then, reappear when solving the subsequent blocks of braid relations  $Y_{ext}^{(N+i)}$, $i=4, \dots, 3N-1$.\smallskip

Relations (\ref{eigen-1})-(\ref{specR-a2}) concerning the spectra of the R-matrices were observed experimentally. Their proof is a straightforward, although lengthy, calculation.
\hfill$\blacksquare$

\subsection{Type $GL(2|1)$ R-matrices  in space $V$ of odd dimension
 } \lb{sec5.2}

\begin{prop}
\lb{prop3}	
Consider a conservative upper anti-triangular operator 
$R\in {\rm Aut}(V^{\otimes 2})$, where the space $V$ has odd dimension: $\mbox{dim}\, V =N= 2n-1$, $n\geq 2$, The blocks $R^{(i)}$,  $i=2,\dots ,2N$, of the operator $R$ are defined as follows.

The first $n$ blocks and the last $n-1$ blocks have the form
\ba
\lb{B1}
R^{(i+1)}_{a,b} &=& J^{[i]}_{a,b}(q,v), \qquad\qquad\;\qquad\;\;\mbox{for}\;\; i=1,\dots ,n, \qquad\;\, a,b=1,\dots ,i;
\\[2pt]
\lb{B2}
R^{(3n+i-1)}_{c,d} &=& -\,v\, J^{[n+i-2]}_{c+i-1,d+i-1}(q,q v), \quad\mbox{for}\;\; i=1,\dots ,n-1, \;\;\; c,d=1,\dots ,n-i;
\ea

The blocks  $R^{(n+i+1)}$, $i=1,\dots ,n-1$, contain two anti-triangular smaller blocks:
the central $(n-i)\times (n-i)$ sub-block $R_{C}^{(n+i+1)}$ with indices running from $i+1$ to $n$, and the border $2i\times 2i$ sub-block $R_{B}^{(n+i+1)}$ whose indices run from $1$ to $i$ and from $n+1$ to $n+i$. The latter sub-block contains 3 nontrivial $i\times i$ matrices
$R_{B_0}^{(n+i+1)}$, $R_{B_+}^{(n+i+1)}$ and $R_{B_-}^{(n+i+1)}$.
The structure of $R^{(n+i+1)}$ is shown in the figure below.
\ba
\nn
&\underline{\mbox{\footnotesize\rm indices:}\hspace{9mm}
\mbox{\footnotesize\rm 1}\qquad\;\dots\qquad\;\; 
\mbox{\footnotesize\rm i}\quad \mbox{\footnotesize\rm i+1}\quad\;\;\; \dots\quad\;\;\; 
\mbox{\footnotesize\rm n}\;\;\; \mbox{\footnotesize\rm n+1}\quad\;\;\dots\quad\;\; 
\mbox{\footnotesize\rm n+i}}
&
\\[-4pt]
\lb{pic}
&
R^{(n+i+1)}=\left(\begin{array}{cccc|cccc|cccc}
	.&.&.&\hspace{3.5mm}.&&&&&.&.&.&\hspace{3.5mm}.\\
	.&\hspace{-2mm}R_{B_0}^{(n+i+1)}\hspace{-3mm}&.&\hspace{3.5mm} .&&\mbox{\raisebox{-3mm}{\hspace{7mm}\Large\rm 0}}&&&.&\hspace{-2mm}R_{B_+}^{(n+i+1)}\hspace{-3mm}&.&\\[-1.5mm]
	.&.&.&&&&&&.&.&&\\
	.&.&&&&&&&.&&&\\
	\hline
	&&&&.&.&.&\hspace{3.5mm}.&&&\\
	&\mbox{\raisebox{-3mm}{\hspace{6mm}\Large\rm 0}}&&&.&\hspace{-2mm}R_{C}^{(n+i+1)}\hspace{-3mm}&.&&&\mbox{\raisebox{-3mm}{\hspace{8mm}\Large\rm 0}}&&\\[-1.5mm]
	&&&&.&.&&&&&&\\
	&&&&.&&&&&&&\\
	\hline
	.&.&.&\hspace{3.5mm}.&&&&&&&&\\
	.&\hspace{-2mm}R_{B_-}^{(n+i+1)}\hspace{-3mm}&.&&&\mbox{\raisebox{-3mm}{\hspace{7mm}\Large\rm 0}}&&&&\mbox{\raisebox{-3mm}{\hspace{8mm}\Large\rm 0}}&&\\[-1.5mm]
	.&.&&&&&&&&&&\\
	.&&&&&&&&&&&
\end{array}\right), 
&
\ea
Here, the  entirely zero blocks of the matrix $R^{(n+i+1)}$ are marked with $0$, while the non-zero components of the non-trivial blocks are indicated by dots. The matrices
$R_{C}^{(n+i+1)}$, $R_{B_+}^{(n+i+1)}$ and $R_{B_-}^{(n+i+1)}$ are anti-triangular, whereas in the matrix $R_{B_0}^{(n+i+1)}$ the components adjacent to the anti-diagonal from below are also non-zero.  The nontrivial sub-blocks in (\ref{pic}) have the form
\ba
\lb{B3}
\left(R_{C}^{(n+i+1)}\right)_{a,b} &=& J^{[n+i]}_{a+i,b+i}(q,v), \qquad \qquad \qquad \quad\;\;\; a,b=1,\dots , n-i,
\\[2pt]
\lb{B4}
\left(R_{B_+}^{(n+i+1)}\right)_{c,d} &=& J^{[i]}_{c,d}(q,qv), \qquad
\left(R_{B_-}^{(n+i+1)}\right)_{c,d} \,=\, v\, J^{[i]}_{c,d}(q,v),
\\[2pt]
\lb{B6}
\left(R_{B_0}^{(n+i+1)}\right)_{c,d} &=&  J^{[i+1]}_{c,d}(q,v) -  v\, J^{[i]}_{c-1,d}(q,v),\qquad\qquad  c,d= 1,\dots , i,
\ea
where indices of all the sub-matrices in (\ref{pic}) are enumerated starting from 1, and we assume $J^{[i]}_{0,d}=0$.\smallskip

The blocks  $R^{(2n+i)}$, $i=1,\dots ,n-1$, have a structure analogous to that shown in  figure (\ref{pic}). They contain two anti-triangular smaller blocks:
the central $(i-1)\times (i-1)$ sub-block $R_{C}^{(2n+i)}$,\footnote{This block is absent in case $i=1$.} and the border $2(n-i)\times 2(n-i)$ sub-block $R_{B}^{(2n+i)}$, which contains 3 nontrivial $(n-i)\times (n-i)$ matrices 
$R_{B_0}^{(2n+i)}$, $R_{B_+}^{(2n+i)}$ and $R_{B_-}^{(2n+i)}$. All matrices
$R_{C}^{(2n+i)}$, $R_{B_+}^{(2n+i)}$, $R_{B_-}^{(2n+i)}$  and $R_{B_0}^{(2n+i)}$ are now anti-triangular.
They are defined as
\ba
\lb{B7}
\left(R_{C}^{(2n+i)}\right)_{a,b} &=& -v\, J^{[i-1]}_{a,b}(q,q v), \qquad \qquad \qquad \qquad\qquad a,b=1,\dots , i-1,
\\[2pt]
\lb{B8}
\left(R_{B_+}^{(2n+i)}\right)_{c,d} &=& J^{[n+i-1]}_{c+i,d+i-1}(q,qv), \qquad
\left(R_{B_-}^{(2n+i)}\right)_{c,d} \,=\, v\, J^{[n+i-1]}_{c+i-1,d+i}(q,v),
\\[2pt]
\lb{B9}
\left(R_{B_0}^{(2n+i)}\right)_{c,d} &=&  J^{[n+i-1]}_{c+i,d+i-1}(q,q v) -  v\, J^{[n+i-1]}_{c+i-1,d+i}(q,v),\quad\;  c,d= 1,\dots , n-i,
\ea
This operator becomes the R-matrix if the parameters $q$ and $v$ satisfy the condition	
\be 
\lb{restrict-B}
1-q^{n-1} v=0.
\ee	
Then, the spectra of the blocks of the R-matrix are following
\ba
\lb{specR-b1}
&
{\rm Spec\,} R^{(i+1)} = \left\{  \lambda_0,\dots ,\lambda_{i-1}  \right\}, \;\; {\scriptstyle i=1,\dots ,n,}
&
\\[2pt]
&
\lb{specR-b2}
{\rm Spec\,} R_{C}^{(n+i+1)} = \left\{  \lambda_0,\dots ,
\lambda_{n-i-1}   \right\},\; 
{\rm Spec\,} R_{B}^{(n+i+1)} = \left\{  \lambda_0,\lambda_1^{\# 2},\dots ,   \lambda_{i-1}^{\# 2},\lambda_i\right\}, \;\; {\scriptstyle i=1,\dots ,n-1,}
\\[2pt]
\lb{specR-b3}
&
{\rm Spec\,} R_{C}^{(2n+i)} = \left\{  \lambda_1,\dots ,
\lambda_{i-1}   \right\},\; 
{\rm Spec\,} R_{B}^{(2n+i)} = \left\{  \lambda_0,\lambda_1^{\# 2},\dots ,   \lambda_{n-i-1}^{\# 2},\lambda_{n-i}\right\}, \;\; {\scriptstyle i=1,\dots ,n-1,}
&
\\[2pt]
\lb{specR-b4}
&
{\rm Spec\,} R^{(3n+i-1)} = \left\{  \lambda_1,\dots ,\lambda_{n-i}  \right\}, \;\; {\scriptstyle i=1,\dots ,n-1,}
&
\ea
where the eigenvalues $\lambda_k=(-1)^k q^{k(k-N)\over 2}$, $k=0,\dots ,n-1$, are given by eq. (\ref{eigen-1}). Hence, for generic values of the parameter $q$ the spectrum of R-matrix (\ref{B1})--(\ref{B9}) is
\be 
\lb{specR-B}
{\rm Spec\,} R = \left\{ \lambda_0^{\#(2N-1)}\right\}\cup
\left\{ \lambda_k^{\# 4(N-2k)}   \right\}_{k=1,\dots n-1}, \quad (N=2n-1,\; n\geq 2).
\ee
\end{prop}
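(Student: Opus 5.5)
Our plan is to follow the block-by-block strategy of Propositions \ref{prop1} and \ref{prop2}, verifying the braid relation successively in the blocks $Y^{(m)}$, $m=3,\dots,3N$, of $V^{\otimes 3}$.

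\emph{Pre-equatorial part.} Blocks $R^{(2)},\dots,R^{(n+1)}$ are given by (\ref{B1}), i.e.\ by (\ref{pre-eq}) with $x=1$. Proposition \ref{prop1} therefore already shows that $Y^{(3)},\dots,Y^{(n+2)}$ vanish. The restriction (\ref{restrict-B}) is exactly condition $b_1)$ of (\ref{restrict}) at level $n$. So in block $Y^{(n+3)}$ we may take the branch $b)$ solution with $v'=qv$ and $x'=1$ for $R^{(n+2)}$. We check that this coincides with (\ref{pic}), (\ref{B3})--(\ref{B6}) at $i=1$:
\begin{itemize}
\item the central sub-block is $J^{[n+1]}_{a+1,b+1}$;
\item the border $2\times2$ block is $\left(\begin{smallmatrix}1-qv&1\\ qv&0\end{smallmatrix}\right)$, since $J^{[1]}(q,qv)=1$ and $vJ^{[1]}(q,v)=v$;
\item the entry $R_{B_0}$ equals $J^{[2]}_{11}=1-v$.
\end{itemize}
The apparent mismatch between $1-qv$ and $1-v$, and between $qv$ and $v$, is absorbed by the twist freedom $x'$, $v'$. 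We would fix this bookkeeping first. From this point on we are past the level where Proposition \ref{prop1} applies, and a new induction is needed.

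\emph{Middle blocks.} For the blocks $R^{(n+i+1)}$, $i\ge 2$, we would argue by induction on $i$, treating $Y^{(n+i+2)}$ as in the proof of Proposition \ref{prop1}.

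First, the equations in $Y^{(n+i+2)}$ are quadratic-linear in the entries of $R^{(n+i+1)}$, with coefficients built from already fixed blocks. The entries of the central sub-block $R_C$ satisfy the same triangular linear system as in Proposition \ref{prop1}, shifted by $i$. Its unique solution (under $(v;q)_{n-1}\neq0$, which holds for generic $q$ once $v=q^{1-n}$) is $J^{[n+i]}_{a+i,b+i}(q,v)$. This is the same mechanism as the embedding trick in the proof of Proposition \ref{prop2}.

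Second, for the border sub-block the indices $1..i$ and $n+1..n+i$ decouple from the central indices. The factorized relations force the zero blocks of (\ref{pic}), as in branch $2)$ of Proposition \ref{prop1}. The remaining equations are then linear systems for $R_{B_\pm}$ and $R_{B_0}$. We would verify that (\ref{B4}), (\ref{B6}) solve them by reducing each equation to a $q$-Pascal / $q$-Vandermonde identity for the matrix entries (\ref{JJ}). The shifted argument $qv$ in $R_{B_+}$ reflects the relation $(q^{b-1}\cdot qv;q)_m=(q^{b}v;q)_m$.

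\emph{Blocks past the equator.} For $R^{(2n+i)}$ and $R^{(3n+i-1)}$ we would use the doubling embedding $V\subset W$, $\dim W=2N$, from the proof of Proposition \ref{prop2}. This turns the post-equatorial blocks into pre-equatorial blocks of $R_{ext}$ with vanishing outer rows and columns. The same linear systems then yield (\ref{B7})--(\ref{B9}) and (\ref{B2}), the latter being of the form $-v\,J(q,qv)$, i.e.\ the "second eigenvalue branch". At each stage the constraint (\ref{restrict-B}) reappears as the consistency condition, as in Proposition \ref{prop2}.

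\emph{Spectra.} Since every block is block-triangular with respect to the decomposition in (\ref{pic}), its spectrum is the union of the spectra of $R_C$ and $R_B$. The matrices $J^{[i]}(q,v)$ have spectrum $\{\lambda_0,\dots,\lambda_{i-1}\}$ by Proposition \ref{prop2}, so (\ref{specR-b1}) and (\ref{specR-b4}) follow after substituting $v\to qv$ and rescaling by $-v$. For $R_B$ we would compute the characteristic polynomial via the Schur complement of the $2i\times2i$ block. Summing multiplicities over all blocks then gives (\ref{specR-B}).

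\emph{Main obstacle.} We expect the hardest step to be the verification that the border sub-blocks (\ref{B4})--(\ref{B9}) satisfy the coupled equations in $Y^{(n+i+2)}$ and $Y^{(2n+i+1)}$. Our first attempt would be to reduce these to generating-function identities for Gaussian binomials, checking them symbolically for small $n$ to guide the general proof.
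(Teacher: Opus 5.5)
Your skeleton matches the paper's: Proposition~\ref{prop1} up to $Y^{(n+3)}$, branch $b_1)$, then block by block. However, two steps do not work as stated.

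First, at $R^{(n+2)}$ the branch $b)$ data must be $v'=v$, $x'=1$. With that choice, (\ref{B3})--(\ref{B6}) at $i=1$ give exactly the border block $\left(\begin{array}{cc}1-v&1\\ v&0\end{array}\right)$; your own computation $R_{B_-}=vJ^{[1]}(q,v)=v$ already says so. The mismatch you found for $v'=qv$ cannot be ``absorbed by twist freedom''. The parameter $v'$ is an eigenvalue parameter: the border block has spectrum $\{1,-v'\}$, which is invariant under gauge and diagonal twists. In the paper's derivation, $v'=v$ is forced in $Y^{(n+4)}$ by the condition $(v-v')R^{(n+3)}_{2,2}=0$ of (\ref{branch-2}), since here $R^{(n+3)}_{2,2}=(q-1)v\neq 0$. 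The other root, $R^{(n+3)}_{2,2}=0$, leads to Proposition~\ref{prop4}.

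Second, and more seriously, the post-equatorial step fails. The doubling embedding $V\subset W$ only rewrites the braid relations. In Proposition~\ref{prop2} it pays off because the lower blocks of $R_{ext}$ are (truncated) $J$-blocks of Proposition~\ref{prop1} type, whose systems were already solved. Here the blocks $R^{(n+2)},\dots,R^{(2n)}$ feeding into $Y^{(2n+2)},Y^{(2n+3)},\dots$ have the new border structure. So no previously solved system produces (\ref{B8})--(\ref{B9}), and ``the same linear systems'' is not available. The border equations in the middle blocks are not purely linear either: the paper meets the factorized quadratic (\ref{branch-3}), with a dead-end factor, and the spectral constraint (\ref{cond-spec}). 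This is harmless for a pure verification, but it confirms that nothing from Propositions~\ref{prop1}--\ref{prop2} transfers to $R_{B_0}$ and $R_{B_\pm}$.

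What isolates the genuinely new content is the grading already visible in (\ref{pic}). Set $V_0={\rm Span}\{e_1,\dots,e_n\}$ and $V_1={\rm Span}\{e_{n+1},\dots,e_{2n-1}\}$; then $R$ preserves the number of tensor factors lying in $V_1$.
\begin{itemize}
\item Its restriction to $V_0^{\otimes 2}$ is exactly the R-matrix of Proposition~\ref{prop2} with $N=n$.
\item Its restriction to $V_1^{\otimes 2}$, after relabelling $a\mapsto a-n$, is $-v$ times the R-matrix of Proposition~\ref{prop2} with $N=n-1$ and $v\to qv$.
\item Condition (\ref{restrict-B}) is literally condition $a_1)$ for both.
\end{itemize}
This disposes of the central sub-blocks, of (\ref{B2}) and (\ref{B7}), and of the sectors of $Y$ inside $V_0^{\otimes 3}$ and $V_1^{\otimes 3}$. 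It also gives the spectra of $R_C^{(n+i+1)}$, $R_C^{(2n+i)}$ and $R^{(3n+i-1)}$, but for those corner blocks you need (\ref{symm-eq}), not just the spectrum of a full $J^{[i]}$.

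What is left is every sector of $Y^{(m)}$ with one or two factors in $V_1$. There (\ref{B4})--(\ref{B6}) and (\ref{B8})--(\ref{B9}) interact with the pure blocks, and that is the whole substance of the proposition. Your proposal leaves it as unidentified ``$q$-Pascal/$q$-Vandermonde'' identities, and leaves the border spectra to an unspecified Schur-complement computation; the paper itself only checked the spectra experimentally. The paper handles all of this only through its block-by-block derivation, with the constraints $v'=v$, $v''=qv^2$ and the root $-v$ for the central entry of $R^{(2n+2)}$ built into (\ref{B3})--(\ref{B9}). A verification proof would have to supply explicit identities for exactly these mixed sectors.
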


\begin{rem}{\rm
		The multiplicities of the eigenvalues in the spectra of the R-matrices   (\ref{B1})--(\ref{B9}) coincide with the dimensions of the irreducible components in the decompositions of the tensor square of the symmetric tensor representations of the Lie super-algebra $\frak{sl}(2|1)$ (see \cite{BR} and \cite{GQS}).  
		Therefore, we believe that these numerical R-matrices  are  related (up to rescaling and diagonal twist transformation) to realization of the universal R-matrix (see \cite{KT}, \cite{Y}) of the quantized universal enveloping algebra $U_q(\frak{sl}(2|1))$ on the tensor squares of its symmetric tensor representations. That is why we call it the R-matrix of the type $GL(2|1)$.
	}
\end{rem}
\begin{rem}{\rm
In \cite{DKK}, a general formula for the $\frak{sl}(2\vert1)$-invariant R-matrix was obtained (for further developments, see \cite{DG} and the references therein).
Unlike our constant R-matrices (\ref{B1})--(\ref{B9}),  
 this R-matrix depends on the spectral parameter and acts on the tensor product of infinite-dimensional spaces; however, it corresponds to a specific "classical" choice of our parameter $q$: $q=1$. 
We believe that the R-matrix derived in \cite{DKK} and the R-matrices from proposition \ref{prop3}	represent different limits of the same $U_q(\frak{sl}(2\vert1))$-invariant R-matrix. We are going to trace the connection between them in a forthcoming paper devoted to the baxterization of the constant R-matrices obtained in the present work.
}
\end{rem}
 
\begin{rem}{\rm
	One might hope that applying the Reshetikhin–Turaev procedure \cite{RT,RT2} to the R-matrices (\ref{B1})–(\ref{B9}) would yield meaningful link invariants.
	To our surprise, this is not the case.
	Direct calculations for R-matrices acting on spaces $V$ of low dimensions (3, 5, and 7) --- performed for knots and links with a small number of crossings --- yield trivial invariants identically equal to 1.
	We conjecture that this triviality holds for all R-matrices in this family and for arbitrary links.
		}
\end{rem}

\begin{rem}{\rm
		Unlike the R-matrices (\ref{gl2-1}) and (\ref{gl2-2}), the R-matrix defined by (\ref{B1})–(\ref{B9}) cannot be transformed—via gauge transformations and a diagonal twist—into a form suitable for stochastic applications. Note, however, that for the R-matrices in this series, the column sums take on only two distinct values. For the blocks $R^{(i)}$ with $i=2,\dots,2n$, the sum of elements in each column is $1$. For $i=3n,\dots,4n-2$, the column sums equal $-v=-1/q^{n-1}$. Finally, for $i=2n+1,\dots,3n-1$, the column sums are $-v$ for the central sub-block (\ref{B7}) and $1$ for the boundary sub-block defined by formulas (\ref{B8}) and (\ref{B9}).}
\end{rem}

\noindent {\bf Proof.}~
The R-matrices presented in proposition \ref{prop3} originate from sequences of the pre-equatorial blocks in proposition \ref{prop1}, branch b), see eqs. (\ref{pre-eq-2a})-(\ref{pre-eq-2c}).
These blocks satisfy the braid relations in blocks $Y^{(3)}, \dots , Y^{(n+3)}$ under restrictions (\ref{restrict}) on their parameters $q$, $v$. 
The check shows that the braid relations in the higher blocks $Y^{(i)}$, $i>n+3$,  are inconsistent 
if the dimension of  space $V$ is less than $2n - 1$. Below we consider the case 
$\mbox{dim}\, V= 2n-1$. 
We successively solve braid relations for the R-matrix blocks
$R^{(n+3)}, R^{(n+4)}, \dots , R^{(4n-2)}$. The ansatz ensures that the equations for their components are either quadratic or linear. As in the proof of proposition 
\ref{prop1}, we will explicitly consider  only solutions to the quadratic relations, whereas for the systems of linear relations, we will merely present their solutions.
\smallskip

Consider braid relations in block $Y^{(n+4)}$. They impose following conditions on the components of the R-matrix block $R^{(n+3)}$:
\begin{enumerate}
	\item[-]
	$R^{(n+3)}$ has the block-diagonal form with two blocks: a boundary sub-block $R_{B}^{(n+3)}$ of size 4×4 located in the first two and last two rows and columns, and a central 
	sub-block $R_{C}^{(n+3)}$ of size $(n-2)\times(n-2)$.
	\item[-] Components of  the central sub-block have a standard form (c.f. with eq. (\ref{gl2-2}))
	$$
	\left(R_{C}^{(n+3)}\right)_{a,b} = J^{[n+2]}_{a+2,b+2}(q,v), \quad a,b =1,\dots , n-2.
	$$ 
	\item[-] Assuming that the  border block $R_{B}^{(n+3)}$ does not split further into diagonal sub-blocks, i.e., that at least one of components $R^{(n+3)}_{\;a,1}$, $R^{(n+3)}_{\;1,a}$, $a\in\{2,n+1\}$ is  non-zero, one has
	\be
	\lb{branch-2} 
	R^{(n+3)}_{1,n+2}=x', \quad R^{(n+3)}_{n+2,1}={v^2/ x'}, \quad (v-v')R^{(n+3)}_{\;2,2}=0.
	\ee
\end{enumerate}
The last of these conditions gives rise to two branches in the solution of the braid relations. Here, we analyze the case $v=v'$. The case $R^{(n+3)}_{\;2,2}=0$ will be considered  in the next subsection.

The braid relarions in block $Y^{(n+4)}$, and in all higher blocks $Y^{(k)}$, $k> n+4$,  do not impose any conditions on the components $R^{(n+3)}_{1,n+1}$ and $R^{(n+3)}_{2,n+1}$, 
and we redenote them briefly
$$
R^{(n+3)}_{1,n+1}=g'', \quad R^{(n+3)}_{2,n+1}=x''.
$$
Assuming additionally that $g''$ and $x''$ are both different from $0$,
we can interpret them, respectively, as the new gauge and diagonal twist R-matrix parameters.

Furthermore, the braid relations in block $Y^{(n+4)}$ 
involve a quadratic condition on $R^{(n+3)}_{\;1,1}$ and $R^{(n+3)}_{\;2,1}$ that factors into linear terms
\be
\lb{branch-3}
\left(x'' R^{(n+3)}_{\;1,1}- g'' R^{(n+3)}_{\;2,1}\right)\left(x'' R^{(n+3)}_{\;1,1}-g'' R^{(n+3)}_{\;2,1}+x''(1-v^2)\right)\,=\,0.
\ee
As it turns out, the branch of solution
corresponding to setting the second factor in eq.(\ref{branch-3}) to zero is a dead end. 
By setting the first factor in (\ref{branch-3}) to zero, we finally
find a one parametric family of solutions for the border block $R_{B}^{(n+3)}$.
By parameterizing this family with a new eigenvalue $v''$ of the boundary block, we obtain the following expression for $R^{(n+3)}$
\ba
\lb{pic2}
&
R^{(n+3)}=\left(\begin{array}{cc|c|cc}
	(1-v)(1-{v''\over v})&{g'' v''\over x'' v}& & g''&x'\\[3pt]
	(1-v)(1-{v''\over v}){x''\over g''}&{v''\over v}-v&\raisebox{3mm}{\Large 0}& x''&0\\[4pt]
	\hline
\hspace{2cm}\raisebox{-2mm}{\Large 0 }	&&\raisebox{-2mm}{$||J^{[n+2]}(q,v)||_{a,b=3}^n$}&
\hspace{8mm}\raisebox{-2mm}{\Large 0 }\hspace{-2mm}&
	\\[9pt]
	\hline
	{(1-v)(v-v'')\over g''}	& \;\;\;\;{v''\over x''}^{^{\phantom{J^J}}}& &&
	\\[4pt]
	{v^2\over x'}	&0&\raisebox{4mm}{\Large 0}&\hspace{8mm}\raisebox{4mm}{\Large 0}&
\end{array}\right).
&
\ea
Note that the spectrum of the border sub-block has the form
$$
\mbox{\rm Spec\,}R_{B}^{(n+3)}\, =\,\{1,-v^{\# 2},v''\}.
$$

Next, we proceed to consider the braid relations in block $Y^{(n+5)}$. They dictate the block-diagonal structure for $R^{(n+4)}$, with the central sub-block of a size $(n-3)\times (n-3)$ and the border $6\times 6$ sub-block. Components of the central block are fixed in the standard way
$$
\left(R_{C}^{(n+4)}\right)_{a,b} = J^{[n+3]}_{a+3,b+3}(q,v), \quad a,b =1,\dots , n-3.
$$
The components of the border block are uniquely determined if we assume that its outermost rows and columns do not separate into a diagonal 
$2\times 2$ sub-block and fix the penultimate element of its first row, assuming it is non-zero:
$$
\left(R_{B}^{(n+4)}\right)_{1,5}\equiv R^{(n+4)}_{1,n+2}=g''',\quad g'''\neq 0.
$$
The new parameter $g'''$ is a gauge parameter of the R-matrix. 

An important feature of this stage is the appearance, among the relations of block $Y^{(n+5)}$, of a condition on the  spectral values of the R-matrix: 
\be
\lb{cond-spec}
v''=q v^2.
\ee
Taking into account this condition, an explicit expression for the border block in the case $x'=x''=g'=g''=g'''=1$ is given by the formulas (\ref{B3})-(\ref{B6}). Expression for 
$R_{B}^{(n+4)}$ 
with arbitrary values of the diagonal twist parameters  $x$ and $x'$  are given  in the appendix.

The subsequent solution of the braid relations in blocks  $Y^{(n+2+i)}$, $i\leq n-1$, which contain 
only pre-equatorial blocks of the R-matrix, proceeds in a similar manner. 
Resolving  the braid relations in block $Y^{(n+2+i)}$, 
determines block $R^{(n+1+i)}$ of the R-matrix.
It has a block-diagonal form with two sub-blocks ---
a central one and a border one --- of sizes $(n-i)\times(n-i)$ and $2i\times 2i$, respectively.
Restricting consideration to the case where no diagonal $2\times 2$ sub-block with outermost columns and rows is separated in the border block,
we fix block $R^{(n+1+i)}$ up to a single gauge parameter 
$R^{(n+1+i)}_{1, n+i-1}=g^{(i)}$, which we assume to be non-zero.\smallskip

As we have already noted, the system of braid relations for the first post-equatorial block becomes consistent for the first time in space $V$ of dimension $2n-1$. The first post-equatorial block of the R-matrix ---  $R^{(2n+1)}$ --- is uniquely determined by the braid relations in block 
$Y^{(2n+2)}$. It is given by formulas (\ref{B7})-(\ref{B9}) for $i=1$, so that its central sub-block has zero size.
Furthermore, at this stage, one of the two solution branches (\ref{restrict}) for the parameters $q$ and $v$ is eliminated.
Only the possibility $b_1)$ remains: $1-q^{n-1} v=0$.\smallskip

Subsequently, resolution of the braid relations in each block $Y^{(2n+2+i)}$, $i=1,\dots ,n-2$, determines the block $R^{(2n+1+i)}$ of the R-matrix, consisting of two diagonal sub-blocks --- $R_{C}^{(2n+1+i)}$ and $R_{B}^{(2n+1+i)}$ --- of non-zero sizes $i\times i$ and $2(n-1-i)\times2(n-1-i)$, respectively.
The only exception to this rule is that the relations in the block $Y^{(2n+3)}$ do not uniquely determine the central component 
in block $R^{(2n+2)}$. Instead, they yield a quadratic condition for it:
$$
\left(R^{(2n+2)}_{n-1,n-1}+v\right)\left(R^{(2n+2)}_{n-1,n-1}-1\right)\, =\, 0.
$$
However, at the next stage, it turns out that the relations in block $Y^{(2n+4)}$ are consistent only  with the choice $R^{(2n+2)}_{\;n,n}=-v$. 

Formulas for the R-matrix blocks $R^{(2n+1+i)}$, $i=1,\dots ,n-2$, are given in (\ref{B7})-(\ref{B9}).\smallskip

Finally, the remaining blocks of the R-matrix $R^{(3n+i)}$, $i=0,\dots ,n-2$, have an anti-triangular form (\ref{B2}) and are uniquely determined by the conditions $Y^{(3n+1+i)}=0$.

Formulas (\ref{specR-b1})-(\ref{specR-B}) describing the spectra of the R-matrices were observed experimentally. 
\hfill$\blacksquare$\medskip

\subsection{Yet another sequence of R-matrices at the roots of unity in space $V$ of even dimension} \lb{sec5c}

\begin{prop}
	\lb{prop4}	Consider a conservative upper anti-triangular operator 
	$R\in {\rm Aut}(V^{\otimes 2})$, where the space $V$ has even dimension: $\mbox{dim}\, V =N= 2n$, $n\geq 2$, The blocks $R^{(i)}$,  $i=2,\dots ,2N$, of the operator $R$ are defined as follows.
	
	The first $n$ blocks and the last $n-1$ blocks have the form
	\ba
	\lb{C1}
	R^{(i+1)}_{a,b} &=& J^{[i]}_{a,b}(q,v), \qquad\qquad\;\qquad\;\;\mbox{for}\;\; i=1,\dots ,n, \qquad\;\, a,b=1,\dots ,i;
	\\[2pt]
	\lb{C2}
	R^{(3n+i+1)}_{c,d} &=& -\,{q\, v^{-1}} J^{[n+i]}_{c+i,d+i}(q,{q^{-1}v}), \quad\mbox{for}\;\; i=0,\dots ,n-1, \;\;\; c,d=1,\dots ,n-i;
	\ea
	
	The blocks  $R^{(n+i+1)}$, $i=1,\dots ,n$,  have a structure similar to that shown in  figure (\ref{pic}). They contain two anti-triangular smaller blocks:
	the central $(n-i)\times (n-i)$ sub-block $R_{C}^{(n+i+1)}$, and the border $2i\times 2i$ sub-block $R_{B}^{(n+i+1)}$ 
	containing 3 nontrivial $i\times i$ matrices
	$R_{B_0}^{(n+i+1)}$, $R_{B_+}^{(n+i+1)}$ and $R_{B_-}^{(n+i+1)}$.
	All four non-vanishing sub-blocks are anti-triangular. They have the form
	\ba
	\lb{C3}
	\left(R_{C}^{(n+i+1)}\right)_{a,b} &=& J^{[n+i]}_{a+i,b+i}(q,v), \qquad \qquad \qquad \quad\;\;\; a,b=1,\dots , n-i,
	\\[2pt]
	\lb{C4}
	\left(R_{B_+}^{(n+i+1)}\right)_{c,d} &=& J^{[i]}_{c,d}(q,q^{-1}v), \qquad
	\left(R_{B_-}^{(n+i+1)}\right)_{c,d} \,=\, q\,v^{-1} J^{[i]}_{c,d}(q,v),
	\\[2pt]
	\lb{C6}
	\left(R_{B_0}^{(n+i+1)}\right)_{c,d} &=&  J^{[i]}_{c,d}(q,v) -  q\,v^{-1} J^{[i]}_{c,d}(q,q^{-1}v),\qquad\;\;\;  c,d= 1,\dots , i.
	\ea
	
	The blocks  $R^{(2n+i+1)}$, $i=1,\dots ,n-1$, have a structure similar to (\ref{pic}) as well. They contain two anti-triangular smaller blocks:
	the central $i\times i$ sub-block $R_{C}^{(2n+i+1)}$, and the border $2(n-i)\times 2(n-i)$ sub-block $R_{B}^{(2n+i+1)}$ containing 3 nontrivial $(n-i)\times (n-i)$ matrices 
	$R_{B_0}^{(2n+i+1)}$, $R_{B_+}^{(2n+i+1)}$ and $R_{B_-}^{(2n+i+1)}$. All four non-vanishing sub-blocks are anti-triangular. They have the form
	\ba
	\lb{C7}
	\left(R_{C}^{(2n+i+1)}\right)_{a,b} &=& -q\,v^{-1} J^{[i]}_{a,b}(q,q^{-1} v), \qquad \qquad \qquad \qquad\qquad\;\; a,b=1,\dots , i,
	\\[2pt]
	\lb{C8}
	\left(R_{B_+}^{(2n+i+1)}\right)_{c,d} &=& J^{[n+i]}_{c+i,d+i}(q,q^{-1}v), \qquad
	\left(R_{B_-}^{(2n+i+1)}\right)_{c,d} \,=\, q\,v^{-1} J^{[n+i]}_{c+i,d+i}(q,v),
	\\[2pt]
	\lb{C9}
	\left(R_{B_0}^{(2n+i+1)}\right)_{c,d} &=&  J^{[n+i]}_{c+i,d+i}(q, v) - q\, v^{-1} J^{[n+i]}_{c+i,d+i}(q,q^{-1}v),\quad\;  c,d= 1,\dots , n-i,
	\ea
	This operator becomes the R-matrix if the parameter $q$  satisfies the condition	
	\be 
	\lb{restrict-C}
\mbox{\rm $q$ is the primitive root of unity of degree $n$.}
	\ee	
	Then, the spectra of the blocks of the R-matrix are following
	\ba
	\lb{specR-c1}
	&
	{\rm Spec\,} R^{(i+1)} = \left\{  \lambda_0,\dots ,\lambda_{i-1}  \right\}, \;\; {\scriptstyle i=1,\dots ,n,}
	&
	\\[2pt]
		\lb{specR-c2}
	&
	{\rm Spec\,} R_{C}^{(n+i+1)} = \left\{  \lambda_i,\dots ,
	\lambda_{n-1}   \right\},\; 
	{\rm Spec\,} R_{B}^{(n+i+1)} = \left\{  \lambda_{-1},\lambda_0^{\# 2},\dots ,   \lambda_{i-2}^{\# 2},\lambda_{i-1}\right\}, \;\; {\scriptstyle i=1,\dots ,n,}
	\\[2pt]
		\lb{specR-c3}
	&
	{\rm Spec\,} R_{C}^{(2n+i+1)} = \left\{  \lambda_{-1},\dots ,
	\lambda_{i-2}   \right\},\; 
	{\rm Spec\,} R_{B}^{(2n+i+1)} = \left\{  \lambda_0,\lambda_1^{\# 2},\dots ,   \lambda_{n-i-1}^{\# 2},\lambda_{n-i}\right\}, \;\; {\scriptstyle i=1,\dots ,n-1,}
	\qquad&
	\\[2pt]
		\lb{specR-c4}
	&
	{\rm Spec\,} R^{(3n+i)} = \left\{  \lambda_{i-2},\dots ,\lambda_{n-2}  \right\}, \;\; {\scriptstyle i=1,\dots ,n,}
	&
	\ea
	where the eigenvalues $\lambda_0, \dots, \lambda_{n-1}$ are given by eq. (\ref{eigen-1}), and $\lambda_{-1}=-q v^{-1}$. Hence, for generic values of the parameter $v$ the spectrum of R-matrix (\ref{C1})--(\ref{C9}) is
	\be 
	\lb{specR-C}
	{\rm Spec\,} R = \left\{ \lambda_{-1}^{\# N},\lambda_{n-1}^{\# N }\right\}\cup
	\left\{ \lambda_k^{\# 2N}   \right\}_{k=0,\dots n-2}, \quad (N=2n,\; n\geq 2).
	\ee
	\end{prop}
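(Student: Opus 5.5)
The plan follows the proof of Proposition \ref{prop3}, taking the other branch of the dichotomy (\ref{branch-2}). Start from the pre-equatorial blocks of Proposition \ref{prop1}, branch $b)$, with $n$ in place of the index there. By that proposition, the blocks $R^{(2)},\dots,R^{(n+1)}$ in (\ref{C1}) satisfy the braid relations in $Y^{(3)},\dots,Y^{(n+2)}$. The block $R^{(n+2)}$ splits into a central part, given by (\ref{C3}) with $i=1$, and a $2\times2$ border part with eigenvalue parameter $v'$, as in (\ref{pre-eq-2c}). The braid relations then hold through $Y^{(n+3)}$, provided one of the conditions (\ref{restrict}) is imposed.

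\emph{Case choice at $Y^{(n+4)}$.} Analysing $Y^{(n+4)}$ exactly as in the proof of Proposition \ref{prop3}, we arrive at (\ref{branch-2}). This time we take the branch $R^{(n+3)}_{2,2}=0$ with $v'\neq v$. The anti-triangularity of $R^{(n+3)}_{B_0}$ in (\ref{C6}) reflects this choice: unlike (\ref{B6}), it has no subdiagonal entries. The remaining quadratic relations factorise, as in (\ref{branch-3}). Solving them and imposing non-vanishing of the gauge and twist entries determines the border block up to gauge. One further relation, analogous to (\ref{cond-spec}), should force $v'=q v^{-1}\cdot(\text{normalisation})$, which makes $\lambda_{-1}=-qv^{-1}$ the new eigenvalue. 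Concretely, I would substitute the ansatz (\ref{C4})--(\ref{C6}) for $i=2$ into $Y^{(n+4)}$ and check that the border relations reduce to identities for $J^{[i]}(q,v)$ and $J^{[i]}(q,q^{-1}v)$.

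\emph{Induction to the equator.} Next I would run an induction on $i=1,\dots,n$ for the blocks $R^{(n+i+1)}$. The central sub-block (\ref{C3}) satisfies the same linear triangular system as in Proposition \ref{prop1}, so its form follows from the embedding argument used in the proof of Proposition \ref{prop2}: zero rows and columns are cut off. For the border block, the three $i\times i$ pieces should satisfy coupled relations of the form $R_{B_0}=J(q,v)-qv^{-1}J(q,q^{-1}v)$. These I would verify by the $q$-Pascal recursion for the Gaussian binomials in (\ref{JJ}) together with the Pochhammer shift $(q^{b-1}q^{-1}v;q)_m=(1-q^{b-2}v)(q^{b-1}v;q)_{m-1}$.

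\emph{Post-equatorial blocks.} The equator now sits at $R^{(2n+1)}$, because $N=2n$. For the post-equatorial blocks (\ref{C7})--(\ref{C9}) and (\ref{C2}), I would again embed $V$ into $W$ with $\dim W=2N$, as in the proof of Proposition \ref{prop2}. The post-equatorial equations then become pre-equatorial equations with vanishing outer rows and columns. At the first post-equatorial stage, $Y^{(2n+2)}$, consistency should eliminate option $b_1)$ of (\ref{restrict}) and leave only (\ref{restrict-C}). The heuristic reason is that the shifted central block $J(q,q^{-1}v)$ has leading coefficient $-qv^{-1}$, and truncating it at size $n$ requires $(q;q)$-type factors, i.e.\ $[n]_q=0$, to vanish, exactly as in case $a_2)$ of Proposition \ref{prop2}. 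The last blocks (\ref{C2}) are then fixed uniquely by $Y^{(3n+2+i)}=0$, just as (\ref{B2}) was.

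\emph{Spectra and main obstacle.} The spectra (\ref{specR-c1})--(\ref{specR-C}) follow from anti-triangularity: the determinants of the leading anti-diagonal minors give products of $\lambda_k$. Beyond that, the spectra can be checked by the same computation as for (\ref{eigen-1}), and (\ref{specR-C}) follows by counting multiplicities. The main obstacle is the $q$-binomial bookkeeping in the border sub-block identities at general $i$, in particular showing that all relations are consistent at the equator exactly when $q^n=1$. I would first verify $n=2,3$ symbolically, then extract the general identity as a three-term relation among the columns of $J^{[i]}(q,v)$ and $J^{[i]}(q,q^{-1}v)$.
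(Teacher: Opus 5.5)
Your skeleton matches the paper's proof: take $R^{(n+3)}_{2,2}=0$ in (\ref{branch-2}), solve block by block up to the equator, and lose option $b_1)$ of (\ref{restrict}) at the first post-equatorial stage. But the two branch points that carry the real content are misdescribed, and as written your derivation would not go through.

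\textbf{First branch point.} After $R^{(n+3)}_{2,2}=0$, the relations in $Y^{(n+4)}$ fix every entry of $R^{(n+3)}$ except $R^{(n+3)}_{n+1,2}$. That entry obeys $(x''R^{(n+3)}_{n+1,2}-1)(x''R^{(n+3)}_{n+1,2}-vv')=0$, and, unlike (\ref{branch-3}), neither factor is a dead end. So the border block is not determined up to gauge, and no relation at this stage forces $v'$. The condition $v'=q/v$, which is the source of $\lambda_{-1}=-qv^{-1}$, arises only at $Y^{(n+5)}$ and only on branch $c_1)$ of (\ref{branch-22}). On branch $c_2)$, $v'$ stays free, and the construction ends in tensor products of the $n$-dimensional R-matrix of Proposition~\ref{prop2} with those of (\ref{dim2-R}). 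Your claim that a relation ``should force $v'=qv^{-1}\cdot(\mbox{normalisation})$'' is therefore unjustified. You must choose $c_1)$ explicitly and dispose of $c_2)$.

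\textbf{Second branch point.} Since $N=2n$, the first post-equatorial block is $R^{(2n+2)}$, so the decisive stage is $Y^{(2n+3)}$, not $Y^{(2n+2)}$. Your index is inherited from Proposition~\ref{prop3}, where $N=2n-1$. At $Y^{(2n+3)}$ (on branch $c_1)$) a second dichotomy (\ref{branch-4}) appears for the central entry $R^{(2n+2)}_{n,n}$. The root $1$ is incompatible with $Y^{(2n+4)}$. Only with the other root does $Y^{(2n+3)}$ exclude $b_1)$. Your $[n]_q$-truncation heuristic does not replace this analysis.

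If you only want sufficiency, you may skip the branching and verify $Y=0$ directly for the given blocks. Then, however, everything rests on the $q$-binomial identities you have not written down, in particular the one at $Y^{(2n+3)}$, where $q^n=1$ has to enter.

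\textbf{Spectra.} This part is not a proof either. Anti-triangularity gives only the determinant of a block (the signed product of its anti-diagonal entries), not its eigenvalues. Already for $R^{(3)}$ in (\ref{R(3)}), the eigenvalues $1,-v$ depend on the diagonal entry $1-v$. The paper does not prove (\ref{specR-c1})--(\ref{specR-C}); it only reports them as checked experimentally.

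Moreover, ``counting multiplicities'' in the printed block spectra does not give (\ref{specR-C}) for $n\ge 3$. For $n=3$ the count yields $\lambda_0^{\#13}$ and $\lambda_2^{\#5}$ instead of $\lambda_0^{\#12}$ and $\lambda_2^{\#6}$. A direct check locates the discrepancy. Take $n=3$ and $q^3=1$. The border of $R^{(9)}$ has $R_{B_0}=qv^2-v$ and $R_{B_+}R_{B_-}=qv^3$. Its characteristic polynomial is therefore $(\lambda+v)(\lambda-qv^2)$, and its spectrum is $\{\lambda_1,\lambda_2\}$, not the $\{\lambda_0,\lambda_1\}$ given by (\ref{specR-c3}). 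With this correction the $n=3$ count agrees with (\ref{specR-C}).
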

	
\begin{rem}{\rm
We have not established a connection between the multiplicities of eigenvalues in
the spectra of the R-matrices (\ref{C1})–(\ref{C9}) and the dimensions of the irreducible components in the fusion rules for representations  of a certain quantum group. We expect  that this R-matrix is related to representations of
$U_q(\frak{sl}(2|1))$ when $q$ is a roots of unity (see, e.g., \cite{AAB}).
		}
\end{rem}

\begin{rem}{\rm
		The standard Reshetikhin-Turaev procedure for constructing link invariants yields trivial, zero results when applied to 
		the family of $R$-matrices (\ref{C1})–(\ref{C9}).
		This is caused by the vanishing of the "quantum dimension" of the spaces $V$ determined by these R-matrices. One way to overcome this obstacle -- at least in the case of knots -- is to compute invariants of long knots \cite{K2} (for another approach, see \cite{GPT}).
		Our calculations of long-knot invariants for the $R$-matrices (\ref{C1})–(\ref{C9}) yield scalar operators whose scalar factors reproduce the classical Alexander polynomial $\Delta_K(t)$ (see, e.g., \cite{L}), up to a change of variable depending solely on the dimension of the underlying vector space $V$. We conjecture that for $\mbox{dim}\, V = 2n$, these factors coincide with $\Delta_K(v^n)$. We have directly verified the conjecture for $n=2, 3, 4$.}
\end{rem}

\noindent {\bf Proof.}~
The R-matrices presented in this and the previous propositions coincide until the block $R^{(n+2)}$.
The differences between them arise in the block $R^{(n+3)}$ and stem from the fact that the braid relation from the block $Y^{(n+4)}$ -- the rightmost in (\ref{branch-2}) --  admit two solutions.
Here we analyze the solution
$$
R^{(n+2)}_{\;2,2} =0.
$$

It should be noted that in this case, the minimum dimension of the space $V$ in which the braid relations are consistent turns out to be
$\mbox{dim}\, V = 2n$.
Choosing, as before, the notation
$$
R^{(n+3)}_{1,n+1}=g'', \quad R^{(n+3)}_{2,n+1}=x'',
$$
and assuming $g''\neq 0$, $x''\neq 0$, we use the braid relations in block $Y^{(n+4)}$ to fix all other components of $R^{(n+3)}$, with the exception of $R^{(n+3)}_{n+1,2}$, which satisfies a quadratic condition
$$
\left(x''R^{(n+3)}_{n+1,2}-1\right)\left(x''R^{(n+3)}_{n+1,2}-v v'\right)\, =\, 0.
$$
From this point on, we have two branches for the further solution of the braid relations:
\be
\lb{branch-22} 
\mbox{$c_1)$}\;\;
R^{(n+3)}_{n+1,2}=1/x'', \;\; v v'\neq 1,\;\;\; \mbox{or\;\;\; $c_2)$}\;\; 
R^{(n+3)}_{n+1,2}=v v'/x''.
\ee
In both cases, the construction of the pre-equatorial blocks of the R-matrix
$R^{(n+i+1)}$, $i=1,\dots ,n-1$,
proceeds similarly and the solutions are unique up to diagonal gauge transformations.
Their structure is analogous to that shown in figure (\ref{pic}), with all their non-trivial sub-blocks being 
upper anti-triangular.
The only difference between these two branches arises
when resolving the braid relations in block $Y^{(n+5)}$: in case $c_1)$, an additional condition on the spectral values of the R-matrix emerges  
$$
v'=q/v,
$$
whereas in case $c_2)$, no such restrictions appear.\smallskip

The subsequent construction of the equatorial and post-equatorial blocks of the R-matrix proceeds analogously to the constructions in proposition \ref{prop3}, with minor modifications that we discuss below.

When resolving the braid relations in block $Y^{(2n+3)}$, a quadratic condition arises on the component $R^{(n+2)}_{\; n,n}$ of the R-matrix:
\be
\lb{branch-4}
\left(R^{(2n+2)}_{\;n,n}-1\right)\left(R^{(2n+2)}_{\;n,n}- v'\right)\, =\, 0.
\ee
In case $c_2)$, both solution branches for this condition yield R-matrices that are tensor products of two known R-matrices: the $GL(2)$-type R-matrix from Proposition \ref{prop2}, acting on the tensor square of an $n$-dimensional space, and the R-matrices     
(\ref{dim2-R})  acting on the tensor square of a two-dimensional space. Thus, variant $c_2)$ does not yield any new, interesting R-matrices.

In case $c_1)$, the solution $R^{(2n+2)}_{\;n,n}=1$ of equation (\ref{branch-4}) leads to incompatible braid relations in block
$Y^{(2n+4)}$. The solution $R^{(2n+2)}_{\;n,n}=v'$ results in the elimination one of the solution branches (\ref{restrict}) when resolving the braid relations in block $Y^{(2n+3)}$. In this case, only the roots of unity branch (\ref{restrict-C}) remains.
Further analysis of this branch presents no difficulty, and we finally obtain the R-matrix (\ref{C1})-(\ref{restrict-C}).
\smallskip

Again, formulas (\ref{specR-c1})-(\ref{specR-C}) describing the spectra of the R-matrices were checked experimentally.  
\phantom{a}\hfill$\blacksquare$

\section*{Acknowledgments}
The authors thank Alexander Povolotsky and Sergey Derkachov for their interest in the work and for valuable comments.

\section*{Funding}
This work was carried out within the framework of the Basic Research
Program at HSE University (HSE-BR-2025-84).
\bigskip

\appendix
\section{Diagonal twist transformations of the R-matrices}
\lb{appendix-A}

\def\theequation{\thesection.\arabic{equation}}
\makeatletter\@addtoreset{equation}{section}\makeatother

In this appendix, we present formulas for families of R-matrices obtained via diagonal twist transformations \cite{Resh} from the R-matrices considered in Section \ref{sec5}. Note that these R-matrices are effectively obtained through the successive application of two twists: the first is a conjugation by the permutation operator, while the second is a conjugation by the composition of the permutation and a diagonal operator. The application of a pair of twist transformations is necessary because each twist violates our ansatz by mapping the upper anti-triangular blocks of the R-matrix to the lower anti-triangular ones.
We provide answers without proofs, which consist of a direct verification of the braid relations.\smallskip

Let us introduce notation for diagonal twist matrix of size $i\times i$ depending on a single parameter $x\in {\Bbb C}\setminus \{0\}$:
$$
D_x^{[i]} := \mbox{diag}\{1,x,\dots , x^{i-1}\}.
$$
\subsection{$GL(2)$ type R-matrix, $\mbox{dim}\, V=N$}

A one-parametric family of the diagonal twist transformations of the $GL(2)$ type R-matrix (\ref{gl2-1}), (\ref{gl2-2}) is given on its blocks as follows 
\be
\lb{twist-A}
\begin{array}{lclr}
	R^{(i+1)}&\mapsto& \left(D_x^{[i]}\right)^{-1}\! R^{(i+1)}\, D_x^{[i]},& i=1,\dots , N;
	\\[12pt]
	R^{(N+i+1)}&\mapsto& \left(D_{x}^{[N-i]}\right)^{-1}\! R^{(N+i+1)}\, D_{x}^{[N-i]},& i=1,\dots , N-1.
\end{array}
\ee

\subsection{$GL(2|1)$ type R-matrix, $\mbox{dim}\, V=2n-1$}
A family of the diagonal twist transformations of the $GL(2|1)$ type R-matrix (\ref{B1})-(\ref{B9}), parameterized by three independent parameterts $x,y, z\in {\Bbb C}\!\setminus\! \{0\}$, is given on its blocks as follows
\be
\lb{twist-B}
\begin{array}{lclr}
R^{(i+1)}&\mapsto& \left(D_x^{[i]}\right)^{-1}\! R^{(i+1)}\, D_x^{[i]},& i=1,\dots , n;
\\[12pt]
R_{C}^{(n+i+1)}&\mapsto&  \left(D_{x}^{[n-i]}\right)^{-1}\!  R_{C}^{(n+i+1)}\, D_{x}^{[n-i]}, & 
\\[5pt]
R_{B_0}^{(n+i+1)} & \mapsto&   \left(D_{y}^{[i]}\right)^{-1}\! R_{B_0}^{(n+i+1)}\; D_{y}^{[i]}, &
\\[5pt]
R_{B_+}^{(n+i+1)} &\mapsto &  {\textstyle ({y\over z})^{i-1}}\left(D_{y}^{[i]}\right)^{-1}\! 
R_{B_+}^{(n+i+1)}\; D_{x z/y}^{[i]}, & 
\\[5pt]
 R_{B_-}^{(n+i+1)} &\mapsto&  {\textstyle ({z\over y})^{i-1}}\left(D_{x z/ y}^{[i]}\right)^{-1}\! 
R_{B_-}^{(n+i+1)}\; D_{y}^{[i]},
 &i=1,\dots , n-1;
\\[12pt]
R_{C}^{(2n+i)}&\mapsto& \left(D_{z}^{[i-1]}\right)^{-1}\! R_{C}^{(2n+i)}\, D_{z}^{[i-1]}, & 
\\[5pt]
R_{B_0}^{(2n+i)} &\mapsto&  \left(D_{y}^{[n-i]}\right)^{-1}\! R_{B_0}^{(2n+i)}\; D_{y}^{[n-i]}, &
\\[5pt]
R_{B_+}^{(2n+i)} &\mapsto & {\textstyle {x^{i-1}y^{n-1-i}\over z^{n-1}}}  \left(D_{y}^{[n-i]}\right)^{-1}\! 
R_{B_+}^{(2n+i)}\; D_{x z/y}^{[n-i]}, &
\\[5pt]
R_{B_-}^{(2n+i)} &\mapsto& {\textstyle {z^{n-1}\over x^{i-1}y^{n-1-i}}}  \left(D_{x z/ y}^{[n-i]}\right)^{-1}\! 
R_{B_-}^{(2n+i)}\; D_{y}^{[n-i]},&i=1,\dots , n-1;
\\[12pt]
R^{(3n+i-1)}&\mapsto& \left(D_{z}^{[n-i]}\right)^{-1}\! R^{(3n+i-1)}\, D_{z}^{[n-i]},& i=1,\dots , n-1.
\end{array}
\ee
The correspondence between the twist parameters $y$, $z$ and the parameters $x'$, $x''$ that we introdused in the proof of proposition \ref{prop3} is as follows: $x'\leftrightarrow z/y$,~ $x''\leftrightarrow 1/y$. The twist parameter $x$ first appeared in proposition \ref{prop1} (see eq.(\ref{pre-eq})).  In the formulas for the R-matrix in proposition \ref{prop3} it was set equal to 1.

\subsection{R-matrix for $q$ a primitive $n$-th root of unity, $\mbox{dim}\, V=2n$}

A family of the diagonal twist transformations of  R-matrix (\ref{C1})-(\ref{C9}), parameterized by three independent parameterts $x, y, z\in {\Bbb C}\!\setminus\! \{0\}$ is given on its blocks as follows
\be
\lb{twist-C}
\begin{array}{lclr}
	R^{(i+1)}&\mapsto& \left(D_x^{[i]}\right)^{-1}\! R^{(i+1)}\, D_x^{[i]},& i=1,\dots , n;
	\\[12pt]
	R_{C}^{(n+i+1)}&\mapsto&  \left(D_{x}^{[n-i]}\right)^{-1}\!  R_{C}^{(n+i+1)}\, D_{x}^{[n-i]}, & 
	\\[5pt]
	R_{B_0}^{(n+i+1)} & \mapsto&   \left(D_{y}^{[i]}\right)^{-1}\! R_{B_0}^{(n+i+1)}\; D_{y}^{[i]}, &
	\\[5pt]
	R_{B_+}^{(n+i+1)} &\mapsto &  {\textstyle ({y\over z})^{i-1}}\left(D_{y}^{[i]}\right)^{-1}\! 
	R_{B_+}^{(n+i+1)}\; D_{x z/y}^{[i]}, & 
	\\[5pt]
	R_{B_-}^{(n+i+1)} &\mapsto&  {\textstyle ({z\over y})^{i-1}}\left(D_{x z/ y}^{[i]}\right)^{-1}\! 
	R_{B_-}^{(n+i+1)}\; D_{y}^{[i]},
	&i=1,\dots , n;
	\\[12pt]
	R_{C}^{(2n+i+1)}&\mapsto& \left(D_{z}^{[i]}\right)^{-1}\! R_{C}^{(2n+i+1)}\, D_{z}^{[i]}, & 
	\\[5pt]
	R_{B_0}^{(2n+i+1)} &\mapsto&  \left(D_{y}^{[n-i]}\right)^{-1}\! R_{B_0}^{(2n+i+1)}\; D_{y}^{[n-i]}, &
	\\[5pt]
	R_{B_+}^{(2n+i+1)} &\mapsto & {\textstyle {x^{i}y^{n-1-i}\over z^{n-1}}}  \left(D_{y}^{[n-i]}\right)^{-1}\! 
	R_{B_+}^{(2n+i+1)}\; D_{x z/y}^{[n-i]}, &
	\\[5pt]
	R_{B_-}^{(2n+i+1)} &\mapsto& {\textstyle {z^{n-1}\over x^{i}y^{n-1-i}}}  \left(D_{x z/ y}^{[n-i]}\right)^{-1}\! 
	R_{B_-}^{(2n+i+1)}\; D_{y}^{[n-i]},&i=1,\dots , n-1;
	\\[12pt]
	R^{(3n+i+1)}&\mapsto& \left(D_{z}^{[n-i]}\right)^{-1}\! R^{(3n+i+1)}\, D_{z}^{[n-i]},& i=0,\dots , n-1.
\end{array}
\ee

\end{document}